\newtheorem{definition}{Definition}[section]
\newtheorem{remark}[definition]{Remark}
\newtheorem{corollary}[definition]{Corollary}
\newtheorem{lemma}[definition]{Lemma}
\newtheorem{proposition}[definition]{Proposition}
\newtheorem{question}[definition]{Question}
\def\Ind#1#2{#1\setbox0=\hbox{$#1x$}\kern\wd0\hbox to 0pt{\hss$#1\mid$\hss}
\lower.9\ht0\hbox to 0pt{\hss$#1\smile$\hss}\kern\wd0}
\def\ind{\mathop{\mathpalette\Ind{}}}
\def\Notind#1#2{#1\setbox0=\hbox{$#1x$}\kern\wd0\hbox to 0pt{\mathchardef
\nn=12854\hss$#1\nn$\kern1.4\wd0\hss}\hbox to 
0pt{\hss$#1\mid$\hss}\lower.9\ht0
\hbox to 0pt{\hss$#1\smile$\hss}\kern\wd0}
\def\nind{\mathop{\mathpalette\Notind{}}}
\title{A rank based on Shelah trees}
\author{Santiago C\'ardenas-Mart\'in, Rafel Farr\'e}
\date{Version 2021-06-16}
\begin{document} 

\maketitle
\thispagestyle{empty}

\bigskip
\begin{abstract}
	We define a global rank for partial types based on a generalization of Shelah trees.
	We prove an equivalence with the depth of a localized version of the constructions known as dividing sequence 
	  and dividing chain.
	This rank characterizes simple and supersimple types. Moreover, this rank does not change for non-forking
	  extensions under certain hypothesys.
	We also prove this rank satisfies Lascar-style inequalities. 
\end{abstract}

\section{Conventions}
    We denote $L$ a language and $T$ a complete theory. 
    We denote by $\mathfrak{C}$ a monster model of $T$ and assume that it is $\kappa$-saturated and strongly
      $\kappa$-homogeneous for a cardinal $\kappa$ larger enough.
    Every set of parameters $A,B,\ldots$ is considered as a subset of $\mathfrak{C}$ with cardinal less than
      $\kappa$.
	We denote $Ord$ the class of Ordinals and $Lim$ the class of Limit Ordinals.
	
    We denote $a,b,\ldots$ tuples of elements of the monster model, possibly infinite (of length less than $\kappa$).
    We often use these tuples as ordinary sets regardless of their order. 
    We often omit union symbols, for example we write $ABc$ to mean $A\cup B\cup c$.
	Given a sequence of sets $( A_i : i\in\alpha )$ we use $A_{<i}$ and $A_{\leq i}$  to denote $\bigcup_{j<i}A_j$ and $\bigcup_{j\leq i}A_j$ respectively.
	We use $I$ to denote a infinite index set without order and use $O$ for a infinite lineal ordered set.
	Unless otherwise stated, all the complete types are finitary.
	We use $\ind^d$ and $\ind^f$ to denote the independence relations for non-dividing and non-forking respectively.
	By $dom(p)$ we denote the set of all parameters that appear in some formula of $p$.

\section{Definitions and basic properties}
    \begin{definition}\label{DDDef}
		Let $p(x)$ be a partial type.
		Let $\beta$ denote the supremum (if it exists) of all possible depths $\alpha$ for which there
		  exist a tree of parameters $(a_{s,i}:s<\omega^{<\alpha},i\in\omega)$, a sequence of formulas
		  ($\varphi_j(x,y_j)\in L:j\in\alpha)$ and a sequence of numbers $(k_j:j\in\alpha)$ such that 
			\begin{enumerate}[label=(\alph*)]
    		  \item For each $s\in\omega^{<\alpha}$, $\{\varphi_{|s|}(x,a_{s,i}):i<\omega\}$ is
    		    $k_{|s|}$-inconsistent.
    		  \item For each $f:\alpha\longrightarrow\omega$, 
    			$p(x)\cup\{\varphi_i(x,a_{f\upharpoonright i,f(i)}):i\in\alpha\}$ is consistent.
			\end{enumerate}
	  
        If there is not such tree for $p(x)$ we set $DD(p)=0_+$.	  
	    If this supremum does not exist we write $DD(p)=\infty$. 
		Otherwise, if $\beta$ is attained we put $DD(p)=\beta_+$ and $DD(p)=\beta_-$ if it is not attained.
		
		We call $DD(p)$ the \textbf{Dividing Depth of $p$}.
		As usual, we write $DD(a/A)$ for $DD(tp(a/A))$.
	\end{definition}

	We observe that if $\beta$ is a succesor ordinal or $0$, $DD$ can not take the value $\beta_-$.
	In this case, sometimes we will write the ordinal number without subscript.
    We order the class possible of values of $DD$ $\{\beta_-:\beta\in Lim\setminus\{0\}\}\cup\{\beta_+:\beta\in Ord\}\cup\{\infty\}$ in a natural way, if $\alpha<\beta$, then $\alpha_*<\beta_*$, and for every $\beta$, $\beta_-<\beta_+$ and $\beta_*<\infty$.

    It is important to observe that this definition does not depend of the choice of the set of parameters of the set of formulas.	
	Now, we take the usual constructions known as dividing sequence (see for example Tent-Ziegler\cite{Ziegler}) and
	  dividing chain (see for example Kim\cite{Kim}) and we localize them for a type $p$:
	\begin{definition} 
		Let $p(x)$ be a partial type over a set of parameters $A$.
		\begin{enumerate}
			\item A \textbf{dividing sequence of depth $\alpha$ in $p$ over $A$} consists in a sequence of formulas
			  $(\varphi_i(x,x_i)\in L:i\in\alpha)$ and a sequence of parameters $(a_i:i\in\alpha)$ such that 
				\begin{enumerate}
					\item $p(x)\cup\{\varphi_i(x,a_i):i\in\alpha\}$ is consistent.
					\item for each $i\in\alpha,\varphi_i(x,a_i)$ divides over $Aa_{<i}$.
				\end{enumerate}

			\item A \textbf{dividing chain of complete types of depth $\alpha$ in $p$ over $A$} is a sequence of
			  complete types $(p_i(x):i\in\alpha)$  such that $p\subseteq p_0$, $A\subseteq dom(p_0)$, $p_0$ divides
			  over $A$ and for every $0<i<\alpha$, $p_i$ is a dividing extension of	$p_{<i}$.

			\item A \textbf{dividing chain of partial types of depth $\alpha$ in $p$ over $A$} consist in a sequence
			  of partial types $(p_i(x):i\in\alpha)$ and a sequence of sets of parameters $(A_i:i\in\alpha)$, each
			  $p_i$ a partial type over $A_i$, $p\subseteq p_0$, $A\subseteq A_0$, $p_0$ divides over $A$ and for
			  every $0<i<\alpha$, $p_{<i}\subseteq p_i$, $A_{<i}\subseteq A_i$ and $p_i$ divides over $A_{<i}$.
	    \end{enumerate}
	\end{definition}

    \begin{remark}
      In the definition of dividing sequence, it is immediate to prove that it is equivalent consider the formulas 
        in $L$ or in $L(A)$.
    \end{remark}

	In what follows, $s\in \omega^{<\alpha}$ means that $s$ is a sequence of length less than $\alpha$ of elements of
	  $\omega$, i.e. a function from an ordinal less than $\alpha$ into $\omega$. 
	$|s|$ denotes the domain of the function $s$, so $|s|$ is the length of the sequence $s$.	

	\begin{proposition}\label{DDEquiv}
		Let $p(x)$ be a partial type over a set of parameters $A$ and $\alpha$ an ordinal.
		The following are equivalent:
		\begin{enumerate}
		    \item $DD(p)\geq\alpha_+$
		    
			\item There exist a tree of parameters $(a_{s,i}:s\in\omega^{<\alpha},i<\omega)$ and a sequence of
			  formulas $(\varphi_j(x,y_j)\in L:j\in\alpha)$ such that 
				\begin{enumerate}
					\item For each $s\in\omega^{<\alpha}$,$(a_{s,i}:i<\omega)$ is indiscernible over
					  $A\cup\{a_{s\upharpoonright j,s(j)}:j<|s|\}$.
					\item For each $s\in\omega^{<\alpha}$, $\{\varphi_{|s|}(x,a_{s,i}):i<\omega\}$ is inconsistent.
					\item For each $f:\alpha\longrightarrow\omega$, 
						$p(x)\cup\{\varphi_i(x,a_{f\upharpoonright i,f(i)}):i\in\alpha\}$ is consistent.
				\end{enumerate}

			\item There exist a tree of parameters $(a_{s,i}:s\in\omega^{<\alpha},i<\omega)$ and a tree of formulas 
			  $(\varphi_s(x,y_s)\in L:s\in\omega^{<\alpha})$ such that 
				\begin{enumerate}
					\item For each $s\in\omega^{<\alpha}$, $(a_{s,i}:i<\omega)$ is indiscernible over
					  $A\cup\{a_{s\upharpoonright j,s(j)}:j<|s|\}$.
					\item For each $s\in\omega^{<\alpha}$, $\{\varphi_s(x,a_{s,i}):i<\omega\}$ is inconsistent.
					\item For each $f:\alpha\longrightarrow\omega$, 
					  $p(x)\cup\{\varphi_{f\upharpoonright i}(x,a_{f\upharpoonright i,f(i)}):i\in\alpha\}$ is
					  consistent.
				\end{enumerate}

			\item There exists a dividing sequence in $p(x)$ of depth $\alpha$ over $A$.

			\item There exists a dividing chain of complete types in $p(x)$ of depth $\alpha$ $over A$.

			\item There exists a dividing chain of partial types in $p(x)$ of depth $\alpha$ over $A$.

		\end{enumerate}
	\end{proposition}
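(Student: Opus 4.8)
The plan is to prove the six statements equivalent by running the cycle $(1)\Rightarrow(2)\Rightarrow(3)\Rightarrow(4)\Rightarrow(5)\Rightarrow(6)\Rightarrow(1)$, which has the pleasant feature that the delicate implication $(2)\Rightarrow(1)$ is never proved directly but recovered from the cycle; this sidesteps the awkward problem of uniformizing the inconsistency number $k_j$ across the (possibly infinitely many) nodes of a single level. Two of the steps are mere inclusions. For $(2)\Rightarrow(3)$ I put $\varphi_s:=\varphi_{|s|}$, turning the level-indexed family into a node-indexed one while clauses (a)--(c) transcribe verbatim. For $(5)\Rightarrow(6)$ I set $A_i:=dom(p_i)$; since $p_{<i}\subseteq p_i$ gives $A_{<i}=dom(p_{<i})\subseteq dom(p_i)=A_i$, and the phrase ``dividing extension of $p_{<i}$'' means exactly that $p_i$ divides over $dom(p_{<i})=A_{<i}$, the chain of complete types is already a chain of partial types.

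For $(3)\Rightarrow(4)$ I fix the constant branch $f\equiv 0$ and set $a_i:=a_{f\upharpoonright i,\,f(i)}$ and $\varphi_i:=\varphi_{f\upharpoonright i}$. By (3a) the sequence $(a_{f\upharpoonright i,n})_{n<\omega}$ is indiscernible over $Aa_{<i}$ and begins at $a_i$, and by (3b) the set $\{\varphi_i(x,a_{f\upharpoonright i,n}):n<\omega\}$ is inconsistent, hence $k$-inconsistent for some $k$ by compactness and indiscernibility; so $\varphi_i(x,a_i)$ divides over $Aa_{<i}$, while (3c) is the consistency clause (a) of a dividing sequence. For $(4)\Rightarrow(5)$ I choose $c\models p\cup\{\varphi_i(x,a_i):i\in\alpha\}$ and set $p_i:=tp(c/Aa_{\leq i})$; then $p_{<i}=tp(c/Aa_{<i})$, the formula $\varphi_i(x,a_i)\in p_i$ divides over $Aa_{<i}=dom(p_{<i})$, and $p\subseteq p_0$ with $p_0$ dividing over $A$, so $(p_i)_{i\in\alpha}$ is a dividing chain of complete types.

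The bridge back, $(6)\Rightarrow(1)$, I carry out in two moves. First I extract a dividing sequence: for each $i$ pick $\psi_i(x,a_i)\in p_i$ witnessing that $p_i$ divides over $A_{<i}$; since $Aa_{<i}\subseteq A_{<i}$ and dividing over a set implies dividing over any subset (a sequence indiscernible over the larger base is indiscernible over the smaller), each $\psi_i(x,a_i)$ divides over $Aa_{<i}$, and $p\cup\{\psi_i(x,a_i):i\in\alpha\}\subseteq\bigcup_i p_i$ is consistent. Second, from a dividing sequence I build the tree of Definition~\ref{DDDef} by recursion on the level $j$, exploiting strong homogeneity: at level $j$ I take an $Aa_{<j}$-indiscernible sequence witnessing that $\varphi_j$ divides, which furnishes $k_j$-inconsistency with a single $k_j$ (giving clause (a) with uniform $k_j$ per level), and since its members share a type over the base I copy the tail of the dividing sequence below each member by an automorphism fixing that base pointwise. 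The only point of care is coherence at limit levels: I maintain an increasing system of elementary maps along each branch, taking unions at limits and extending by homogeneity, so that every branch realization is an automorphic image over $A\supseteq dom(p)$ of an initial segment of the original consistent set, yielding clause (b).

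The remaining step, $(1)\Rightarrow(2)$, is the heart of the argument and the one I expect to be \emph{hardest}, since it manufactures indiscernibility out of bare $k$-inconsistency. I would first invoke compactness to replace the $\omega$-branching tree by a $\lambda$-branching one with the same level-inconsistency and branch-consistency, for $\lambda$ a sufficiently large iterated exponential of $|T|+|A|+|\alpha|$; this is harmless because every finite sub-configuration of the wide tree already occurs in the narrow one. Then I extract indiscernibility top-down, thinning $(a_{\emptyset,i})_i$ to an $A$-indiscernible subsequence by Erd\H{o}s--Rado, fixing the surviving children, and recursing, at a node $s$ thinning $(a_{s,i})_i$ to a sequence indiscernible over $A\cup\{a_{s\upharpoonright j,s(j)}:j<|s|\}$, including at limit lengths $|s|$. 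Passing to subtrees preserves branch consistency (2c) and level inconsistency (2b), since a subsequence of a $k_j$-inconsistent sequence is still inconsistent, while the thinning delivers (2a). The genuine obstacle here is bookkeeping rather than ideas: I must guarantee that enough branching survives the $\alpha$-many successive thinnings, which is why $\lambda$ must be chosen large in advance and the surviving width tracked through both successor and limit levels of the recursion.
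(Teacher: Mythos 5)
Most of your cycle is sound, and in places it is more self-contained than the paper: your steps $(2)\Rightarrow(3)\Rightarrow(4)\Rightarrow(5)\Rightarrow(6)$ match the paper's arguments, and your two-move $(6)\Rightarrow(1)$ correctly re-derives, via automorphisms and unions of elementary maps at limits, the dividing-sequence-to-tree construction that the paper obtains by citing Proposition 3.8 of Casanovas. The genuine gap is in $(1)\Rightarrow(2)$. You cannot ``thin $(a_{s,i})_i$ to an $A$-indiscernible \emph{subsequence} by Erd\H{o}s--Rado,'' no matter how large a branching $\lambda$ you first arrange by compactness. What Erd\H{o}s--Rado (the standard lemma) gives is an indiscernible sequence \emph{realizing the EM type} of the long sequence --- a new sequence of parameters, not a subsequence of the old one. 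The statement that every sequence of length $\lambda$ contains an infinite subsequence indiscernible with respect to all formulas of all arities is essentially the partition property $\lambda\rightarrow(\omega)^{<\omega}_2$, which defines the Erd\H{o}s cardinal and fails provably in ZFC for every accessible $\lambda$, in particular for any iterated exponential of $|T|+|A|+|\alpha|$; there are theories with arbitrarily long sequences containing no infinite indiscernible subsequence. Once the extracted sequence is not literally part of your tree, there are no ``surviving children'' whose subtrees you can keep, and the level-inconsistency and branch-consistency you planned to inherit for free are lost. This is not bookkeeping; the mechanism itself is unavailable.

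The repair is the paper's device: extract at the level of \emph{whole subtrees} rather than node-sequences. For each node $t$ already treated and each $i<\omega$, let $A_{t,i}$ be the tuple consisting of $a_{t,i}$ together with the entire subtree hanging below that choice, and apply the standard lemma to $(A_{t,i}:i<\omega)$ over $A\cup\{a_{t\upharpoonright\gamma,t(\gamma)}:\gamma<|t|\}$ to obtain a new sequence of subtrees, indiscernible over that base and realizing the Ehrenfeucht--Mostowski type of the old one. The point that rescues the argument is that conditions \emph{(a)} and \emph{(b)} of Definition~\ref{DDDef} are expressible inside this EM type (each $k$-inconsistency is a condition on finitely many of the $A_{t,i}$, and each finite fragment of branch consistency with $p$ lives in the type over $A$ of a single $A_{t,i}$), so the replaced parameters still satisfy them, while the parameters at levels below $|t|$ are untouched and retain the indiscernibility already achieved. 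The recursion then runs through all levels, limit levels included, exactly as in your plan, but with no widening of the tree and no need to track surviving width, since nothing is ever discarded.
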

	\begin{proof} \
		\begin{itemize}
			\item[$1\Rightarrow 2$] 
				We fix a tree of depth $\alpha$ satisfying \emph{(a)}, \emph{(b)} of Definition~\ref{DDDef} and we
				  will construct by induction a  new tree replacing the parameters and preserving the same formulas
				  satisfying \emph{(a)} of \emph{2}, in addition to  \emph{(a)}, \emph{(b)} of 
				  Definition~\ref{DDDef}.
				To start, for each $i\in\omega$ we consider:
				  \[A_i=\left(a_{\emptyset,i}\right)^{\frown}
				    \left(a_{s,j}:s\in\omega^{<\alpha},s(0)=i,j<\omega\right)\]
                By the standard lemma (see for example lemma 7.1.1 in Tent-Ziegler\cite{Ziegler}) there is a sequence
                  $(A'_i:i\in\omega)$ indiscernible over $A$ and satisfying  $EM((A_i:i\in\omega)/A)$, 
                  the Ehrenfeucht-Mostowski type of $(A_i:i\in\omega)$ over $A$. 
                Obviously  $(a'_{\emptyset,i}:i\in\omega)$ is indiscernible over $A$.
                As the requirements \emph{(a)} and \emph{(b)} of Definition~\ref{DDDef} are in the
                  Ehrenfeucht-Mostowski type, the new parameters satisfy them too.
    			
                We assume we have a tree satisfying the Definition~\ref{DDDef} and moreover \emph{(a)} of \emph{2}
                  for $s\in\omega^{<\beta}$ with $\beta<\alpha$.
                We will obtain another one satisfying Definition~\ref{DDDef} and \emph{(a)} for $s\in\omega^\beta$.
    			For each $t\in\omega^\beta$ and $i\in\omega$ we consider:
    			  \[A_{t,i}=\left(a_{t,i}\right)^{\frown}
    				\left(a_{s,j}:s\in\omega^{<\alpha},s\upharpoonright\beta=t,s(\beta)=i,j<\omega\right)\]
    			As before we apply the standard lemma and obtain a sequence of subtrees $(A'_{t,i}:i\in\omega)$
    			  indiscernible over $A\{a_{t\upharpoonright\gamma,t(\gamma)}:\gamma<\beta\}$ and satisfying the
    			  Ehrenfeucht-Mostowski type
    			  $EM((A_{t,i}:i\in\omega)/A\{a_{t\upharpoonright\gamma,t(\gamma)}:\gamma<\beta\})$.

			\item[$2\Rightarrow 3$] 
				It is trivial, because the first tree is a special case of the second tree.

			\item[$3\Rightarrow 4$] 
				Every branch provides a dividing sequence.

			\item[$4\Rightarrow 5$]
				If $(\varphi_i(x,x_i)\in L:i\in\alpha)$ and  $(a_i:i\in\alpha)$ are a dividing sequence, consider
				  $A_i=Aa_{\leq i}$ and $a\models p\cup\{\varphi_i(x,a_i):i\in\alpha\}$.
				Then $(tp(a/A_i):i\in\alpha)$ is a dividing chain of depth $\alpha$ in $p$.
					
			\item[$5\Rightarrow 1$]
				Use, for example, proposition $3.8$ of Casanovas\cite{Casanovas11}.

			\item[$5\Rightarrow 6$] 
				Immediate.

			\item[$6\Rightarrow 4$] 
				Let $(p_i:i\in\alpha)$ and  $(A_i:i\in\alpha)$ be as in the definition.
				Let $a\models p_{<\alpha}$.
				For each $i\in\alpha$ we can take $\varphi_i(x,a_i)$ that divides over $A_{<i}$ and
				  $p_i\vdash\varphi_i(x,a_i)$.
				We may assume that $a_i\in A_i$. 
				So for each $i\in\alpha$, $\models\varphi_i(a,a_i)$, therefore
				  $p(x)\cup\{\varphi_i(x,a_i):i\in\alpha\}$ is consistent and $\varphi_i(x,a_i)$ divides over
				  $Aa_{<i}$.
		\end{itemize}
	\end{proof}

	\begin{proposition}\label{basicPropDD}
		Let $p(x),q(x)$ be partial types.
		Then,
		\begin{enumerate}
			\item If $f$ is an automorphism in the monster model, then $DD(p)=DD(p^f)$.
			\item If $p\vdash q$ then $DD(p)\leq DD(q)$
			\item $DD(p\vee q)=\max(DD(p),DD(q))$.
			\item $DD(p)=0$ if and only if $p$ is algebraic.
			\item Let $S,T$ be type-definable sets. 
				$DD(T)$ is well-defined by property $2$.
				Let $f:S\longrightarrow T$ be a definable bijection. 
				Then, $DD(T)=DD(S)$. 
		\end{enumerate}
	\end{proposition}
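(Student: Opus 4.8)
The plan is to go through the five items in order, freely moving between the raw Definition~\ref{DDDef}, the tree/sequence/chain characterizations of Proposition~\ref{DDEquiv}, and explicit realizations. Items (1) and (2) are almost bookkeeping. For (1), an automorphism $f$ of $\mathfrak{C}$ sends a tree $(a_{s,i})$ witnessing $\alpha_+\leq DD(p)$ to the tree $(f(a_{s,i}))$ with the \emph{same} formulas; since $f$ preserves consistency and $k$-inconsistency of every finite set of formulas, conditions (a) and (b) of Definition~\ref{DDDef} are preserved, and applying $f^{-1}$ gives the converse. As $f$ is a bijection on trees it also respects the $\alpha_-/\alpha_+$ distinction. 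For (2), condition (a) of Definition~\ref{DDDef} never mentions the type, and condition (b) only becomes easier: if $p\vdash q$ and $p(x)\cup\{\varphi_i\}$ is consistent, then so is $q(x)\cup\{\varphi_i\}$. Hence every tree for $p$ is a tree for $q$ of the same depth, giving $DD(p)\leq DD(q)$.

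For (3), the inequality $\max(DD(p),DD(q))\leq DD(p\vee q)$ is immediate from (2) since $p\vdash p\vee q$ and $q\vdash p\vee q$. For the reverse I use equivalence $4$ of Proposition~\ref{DDEquiv}: a dividing sequence of depth $\alpha$ in $p\vee q$ comes with a \emph{single} realization $a\models (p\vee q)\cup\{\varphi_i(x,a_i):i\in\alpha\}$. The dividing conditions constrain $a$ only through its realizing the formulas, and $a\models p$ or $a\models q$; so the very same formulas and parameters form a dividing sequence of depth $\alpha$ in whichever disjunct $a$ realizes. Thus $\alpha_+\leq DD(p\vee q)$ yields $\alpha_+\leq DD(p)$ or $\alpha_+\leq DD(q)$. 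The only delicate point is the limit case: if $DD(p\vee q)=\alpha_-$, I rule out $\max(DD(p),DD(q))<\alpha$ because depth-$\beta$ sequences in $p\vee q$ exist for all $\beta<\alpha$ and each lands in $p$ or $q$, and I rule out $\max=\alpha_+$ because a depth-$\alpha$ tree in $p$ or $q$ would lift back to one in $p\vee q$ by (2).

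For (4), if $p$ is algebraic with realizations $c_1,\dots,c_n$, a depth-$1$ tree would give $\varphi_0$, parameters $(a_i)_{i<\omega}$ and $k$ with $\{\varphi_0(x,a_i):i<\omega\}$ being $k$-inconsistent yet each $\varphi_0(x,a_i)$ consistent with $p$. Choosing $c_{j(i)}\models p\wedge\varphi_0(x,a_i)$ and applying pigeonhole to $j(i)\in\{1,\dots,n\}$ produces infinitely many $i$ realized by a common $c_{j^*}$, contradicting $k$-inconsistency; hence $DD(p)=0$. Conversely, if $p$ is not algebraic I build a depth-$1$ tree directly: take distinct realizations $b_0,b_1,\dots$ of $p$, set $\varphi_0(x,y):=(x=y)$, $a_{\emptyset,i}:=b_i$ and $k_0:=2$. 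Then $\{x=b_i:i<\omega\}$ is $2$-inconsistent while each $x=b_i$ is consistent with $p$, so $DD(p)\geq 1$.

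Item (5) is where I expect the real work. Well-definedness of $DD$ on a type-definable set follows from (2) applied both ways to two types defining the same set. For the bijection, let $\theta(x,y)$ over a parameter set $C$ be the graph of $f$ on $S$, and let $p,q$ define $S,T$. Given a tree $(a_{s,i})$, $(\varphi_j)$ witnessing $\alpha_+\leq DD(S)$, I set $\psi_j(y,z):=\exists x\,(\theta(x,y)\wedge\varphi_j(x,z))$, working in $L(C)$ (allowed by the Remark following Definition~\ref{DDDef}, or by absorbing $C$ into the parameters $a_{s,i}$). For $y\in T$ this is equivalent to $\varphi_j(f^{-1}(y),z)$, and in general $\psi_j(y,z)\vdash q(y)$. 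The hard part is checking that the tree conditions transfer and here bijectivity is used essentially: absolute $k$-inconsistency transfers because if some $y$ satisfied $k$ of the $\psi_j(y,a_{s,i})$ then $y\in T$ and its unique preimage $f^{-1}(y)\in S$ would satisfy $k$ of the $\varphi_j(x,a_{s,i})$, contradicting condition (a) (injectivity/functionality); and branch consistency transfers because a realization $c\models p(x)\cup\{\varphi_i\}$ gives $f(c)\models q(y)\cup\{\psi_i\}$ (surjectivity). This yields a depth-$\alpha$ tree for $T$, so $DD(S)\leq DD(T)$; running the identical construction on the definable bijection $f^{-1}$ gives the reverse inequality, and since $f$ induces a depth-preserving correspondence of trees, the $\alpha_-/\alpha_+$ value is preserved as well. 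The main obstacle throughout is ensuring that the substituted formulas $\psi_j$ are honest formulas that force $y\in T$, so that $f^{-1}(y)$ is defined and the absolute $k$-inconsistency survives the substitution.
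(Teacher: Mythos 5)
Your items (1)--(4) are correct and essentially follow the paper. For (1)--(3) the only cosmetic difference is that you sometimes argue with trees where the paper argues with dividing sequences (interchangeable via Proposition~\ref{DDEquiv}); your observation in (3) that a dividing sequence in $p\vee q$ lands in whichever disjunct a realization satisfies is exactly the paper's remark. In (4) you take a slightly more elementary route for the forward direction: the paper deduces a contradiction from $b\in acl(A)$ forcing $b\ind^d_A a$, while your pigeonhole on the finitely many realizations of $p$ works directly from the raw tree definition and avoids any appeal to non-dividing over algebraic closures; note only that you tacitly use that an algebraic partial type has \emph{finitely many} realizations, which for a type-definable set of finite tuples is equivalent by compactness to all realizations lying in $acl(A)$. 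Both converses come down to the formula $x=b$ with $2$-inconsistency, as in the paper.

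In (5) you follow the paper's route exactly (push the witness through $\psi_j(y,z)=\exists x\,(\theta(x,y)\wedge\varphi_j(x,z))$, injectivity for inconsistency transfer, the image point $f(a)$ for branch consistency), but the justification of the key step rests on a claim that is false as stated: $\psi_j(y,z)\vdash q(y)$ cannot hold for a single formula when $T$ is properly type-definable, and a bare graph formula $\theta$ may hold of pairs outside $S\times T$, so a realization of $k$ of the $\psi_j(y,a_{s,i})$ need not lie in $T$, nor need its witnesses lie in $S$, and ``its unique preimage'' is not available. You correctly flag this as the delicate point, but then resolve it by assertion; to be fair, the paper's own proof is equally informal at the same spot (``$x=f^{-1}(d)$ is the same for every $j$'' presumes $d\in T$ and witnesses in $S$). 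The standard repair, which both arguments need, is local: take $\theta$ functional (part of what a definable map is), note that injectivity of $f$ on $S$ makes $p(x)\cup p(x')\cup\{\theta(x,y),\theta(x',y),x\neq x'\}$ inconsistent, and by compactness choose a finite conjunction $\rho(x)$ of formulas of $p$ over $A$ such that $\rho(x)\wedge\theta(x,y)\wedge\rho(x')\wedge\theta(x',y)\vdash x=x'$; then set $\psi_j(y,z)=\exists x\,(\rho(x)\wedge\theta(x,y)\wedge\varphi_j(x,z))$. Now any $d$ realizing $k$ of the $\psi_j(y,c_{ij})$ yields a \emph{common} witness realizing $k$ of the $\varphi_j(x,c_{ij})$, contradicting $k$-inconsistency with no appeal to membership in $T$, while branch consistency is unaffected since a realization of $p$ satisfies $\rho$. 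With this patch your argument is complete, and your remaining points (well-definedness via (2), the symmetric argument for $f^{-1}$, preservation of the $\alpha_-/\alpha_+$ refinement) are fine.
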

	\begin{proof}
		Without loss of generality, we can assume that $p$ and $q$ are over the same set of parameters $A$.
		\begin{enumerate}
			\item Immediate.

			\item If $p\vdash q$, any dividing sequence in $p$ is a dividing sequence in $q$.
			
			\item $DD(p\vee q)\geq\max(DD(p),DD(q))$ is immediate by $2$.
			  For the other inequality it suffices to remark that a dividing sequence in $p\vee q$ must be either a
			    dividing sequence in $p$ or a dividing sequence in $q$.

			\item $DD(p)\geq1$ means that there exists a formula $\varphi(x,a)$ consistent with $p$ such that 
  			    $\varphi$ divides over $A$.
  			    If $p$ is algebraic, for any realization $b$ of $p\cup\{\varphi\}$, $b\in acl(A)$. 
				But then  $b\ind^d_A a$, which is a  contradiction.

				On the other hand, if $p$ is not algebraic, we pick a realization $b$ of $p$, $b$ not algebraic over
				  $A$.
				Then, the formula $x=b$ divides over $A$ and is consistent with $p$, so $DD(p)\geq1$.

			\item We are going to prove that if $DD(S)\geq\alpha$ then $DD(T)\geq\alpha$.
								
  			  Let $S,T$ be defined by the types $p,q$ respectively.
			  Let $f:S\rightarrow T$ be defined by $\varphi(x,y)$.
			  We can assume that $\varphi(x,y),p(x),q(y)$ are all  over $A$.
			  Let $\alpha_+\leq DD(p)$ and let $s=\{\varphi_i(x,a_i):i\in\alpha\}$ be a dividing sequence in $p$ over
			    $A$.
			  Let $\psi_i(y,a_i)=\exists x(\varphi_i(x,a_i)\wedge\varphi(x,y))$.
			  We are going to prove that $t=\{\psi_i(y,a_i):i\in\alpha\}$ is a dividing sequence in $q$ over $A$.

			  We first check the consistency of $q\cup t$. 
			  Let $a\models p\cup s$ and $b=f(a)$.
			  Then, $b\models q\cup t$.
			  And now, we will verify that for any $i\in\alpha$, $\psi(y,a_i)$ divides over $Aa_{<i}$.
			  As $\varphi(x,a_i)$ divides over $Aa_{<i}$, we have a sequence $(c_{ij}:j\in\omega)$, indiscernible
			    over $Aa_{<i}$, such that $c_{ij}\equiv_{Aa_{<i}}a_i$ for every $j\in\omega$ and
			    $\{\varphi_i(x,c_{ij}):j\in\omega\}$ is inconsistent.
			  We will check that $(\psi_i(y,c_{ij}):j\in\omega)$ is inconsistent too. 
			  Assume that $(\psi_i(y,c_{ij}):j\in\omega)$ is consistent.
			  Then there exists $d\models\{\exists x(\varphi_i(x,c_{ij})\wedge\varphi(x,y)):j\in\omega\}$.
			  As $f$ is bijective, $x=f^{-1}(d)$ is the same for every $j$, so $f^{-1}(d)$ realizes
			    $\{\varphi_i(x,c_{ij}):j\in\omega\}$, a contradiction.
		\end{enumerate}
	\end{proof}

	\begin{proposition}\label{DDInf}
		If $DD(p)\geq(|T|^+)_+$ then $DD(p)=\infty$.
	\end{proposition}
	\begin{proof} 
		Assume that there exists a tree in the definition of $DD$ of depth $\alpha=|T|^+$.
		So, some formula $\varphi$ and a number $k$  appear together infinitely many times in the tree. 
		Therefore we can obtain a subtree with the same $\varphi$ and $k$ of depth $\omega$. 
		By a compactness argument we can obtain a tree of any depth.
	\end{proof}

	\begin{remark}\label{DDsupCT}
		Let $p(x)$ be a partial type over $A$.
		By equivalence $4$ of Proposition~\ref{DDEquiv}, it is immediate that $DD(p)=\sup\{DD(a/A):a\models p\}$.
		Therefore, if $DD(p)\geq\alpha_+$, there exists a completion $q\in S(A)$ of $p$ with $DD(q)\geq\alpha_+$.
	\end{remark}

	\begin{remark} \label{DDLocal}
	    We could have defined the local rank $DD(p,\varphi,k)$ as the supremum of depths of dividing sequences in
	      $p$ with all formulas  of the sequence equal to $\varphi$ and dividing with respect to $k$.   
	    But then, $DD(p,\varphi,k)$ equals the local rank $D(p,\varphi,k)$ (see for example 3.11 in
	      Casanovas\cite{Casanovas11}).
	\end{remark}

	\begin{remark}\label{FDEquiv}
		One can define the notions of forking sequence and forking chain in a similar way.
		The arguments in Proposition~\ref{DDEquiv} remain true to show the equivalence of items $4$, $5$ and $6$
		  replacing dividing by forking everywhere.
		Therefore it is natural to define a rank $FD$ as the set of possible lengths of forking sequences in $p$ 
		  (or equivalently forking chains). 
		However it is not so well behaved. There are two basic questions that remain open.
	\end{remark}

	\begin{question} 
	Does $FD$ depend on the set of parameters?
    	There is no similar tree equivalence of $FD$ showing that $FD$ does not depend on the set of parameters.
    	Therefore we denote it by $FD(p,A)$.
	\end{question}
			
    It is immediate that $DD(p)\leq FD(p,A)$ for any partial type $p$ over $A$.
    For finite values of $DD(p)$ is easy to prove by induction that $DD(p)=FD(p,A)$. 
    It is also easy to prove that if $DD(p)=\omega_-$, then $FD(p,A)=\omega_-$.
	In the next section we will see that for any partial type $p$ over $A$, $FD(p,A)=\infty$ if and only if $DD(p)=\infty$, but for the intermediate values is an open question if $DD=FD$

	\begin{question}
		Is $FD(p,A)=DD(p)$ for any partial type $p$?. 
		In that case, obviously $FD$ would not depend on the set of parameters.
	\end{question}

\section{Simple and Supersimple theories}
	Now, we are going to see that $DD$ characterize simple and supersimple theories. 
	\begin{lemma}\label{DDEqCardinals}
		Let $p(x)$ be a partial type over a set of parameters $A$.
		Let $\kappa$ be any regular cardinal number.
		The following are equivalent: 
		\begin{enumerate}
			\item $DD(p)<\kappa_+$.
			\item For every $B\supseteq A$ and $a\models p(x)$, there exists a set $B_0\subseteq B$ with
			  $|B_0|<\kappa$ such that $a\ind^d_{AB_0}B$.
		\end{enumerate}
	\end{lemma}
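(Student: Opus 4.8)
The plan is to replace $DD$ by its dividing-sequence description and then argue by contraposition. By Proposition~\ref{DDEquiv} (equivalence of items $1$ and $4$), $\kappa_+\leq DD(p)$ holds if and only if there is a dividing sequence of depth $\kappa$ in $p$ over $A$; hence condition $(1)$, namely $DD(p)<\kappa_+$, says exactly that \emph{no} dividing sequence of depth $\kappa$ in $p$ over $A$ exists. I will therefore prove the two contrapositives $\neg(2)\Rightarrow\neg(1)$ and $\neg(1)\Rightarrow\neg(2)$, where $\neg(1)$ is the existence of such a sequence.

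For $\neg(2)\Rightarrow\neg(1)$, suppose there are $B\supseteq A$ and $a\models p$ such that $tp(a/B)$ divides over $AB_0$ for \emph{every} $B_0\subseteq B$ with $|B_0|<\kappa$. I build a dividing sequence of depth $\kappa$ by recursion on $i<\kappa$. At stage $i$ the parameters $b_{<i}=\bigcup_{j<i}b_j$ already chosen form a subset of $B$ of size $<\kappa$ (each $b_j$ is finite and $|i|<\kappa$), so the hypothesis applied with $B_0=b_{<i}$ yields a formula $\varphi_i(x,b_i)\in tp(a/B)$, with $b_i\in B$, that divides over $Ab_{<i}$. The sequence $(\varphi_i(x,b_i):i\in\kappa)$ then satisfies clause $(b)$ of a dividing sequence by construction and clause $(a)$ because $a$ realizes $p\cup\{\varphi_i(x,b_i):i\in\kappa\}$; thus $\kappa_+\leq DD(p)$.

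For $\neg(1)\Rightarrow\neg(2)$, start from a dividing sequence $(\varphi_i(x,a_i):i\in\kappa)$ of depth $\kappa$, put $B=Aa_{<\kappa}$, and fix $a\models p\cup\{\varphi_i(x,a_i):i\in\kappa\}$. Given any $B_0\subseteq B$ with $|B_0|<\kappa$, the set of indices $j<\kappa$ for which some coordinate of $a_j$ lies in $B_0$ has size $<\kappa$, so by regularity of $\kappa$ it is bounded by some $i_0<\kappa$; consequently $B_0\subseteq Aa_{<i_0}$. Since $\varphi_{i_0}(x,a_{i_0})$ divides over $Aa_{<i_0}$ and $AB_0\subseteq Aa_{<i_0}$, the very same indiscernible sequence witnessing the division shows that $\varphi_{i_0}(x,a_{i_0})$ divides over $AB_0$; as this formula belongs to $tp(a/B)$, we get $a\nind^d_{AB_0}B$, and so $(2)$ fails.

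The only delicate point is the monotonicity invoked in the previous paragraph: dividing of a \emph{single} formula over a set is preserved when the base is shrunk to a subset, because a sequence indiscernible over $D$ is a fortiori indiscernible over any $D'\subseteq D$ and realizations of the same type over $D$ also share a type over $D'$. I must take care \emph{not} to confuse this with base monotonicity of the relation $\ind^d$, which need not hold for dividing. Regularity of $\kappa$ is used exactly once, to bound the index set of a $<\kappa$-sized $B_0$; for singular $\kappa$ this step fails and, with it, the lemma.
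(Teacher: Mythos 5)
Your proposal is correct and follows essentially the same route as the paper: both directions are argued by contraposition, with $\neg(2)$ driving a recursion that builds a dividing sequence of depth $\kappa$ (taking $B_0=b_{<i}$ at each stage), and a dividing sequence of depth $\kappa$ refuting $(2)$ by using regularity of $\kappa$ to trap any small $B_0$ inside some $Aa_{<i_0}$ and then shrinking the base of a single dividing formula. Your explicit cautions --- that dividing of one formula is preserved under shrinking the base (unlike base monotonicity for $\ind^d$), and that regularity is used only in the second direction --- are accurate and match the implicit use in the paper's proof; only the final aside that the \emph{lemma} itself fails for singular $\kappa$ overstates what you showed (only that this particular step fails).
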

	\begin{proof} \

    	$1\Rightarrow 2$.
    		Assuming $2$ false, we build a dividing sequence of formulas with parameters in $B$ of depth $\kappa$ by
    		  recursion. 
    		Assume that we have a dividing sequence $(\varphi_i(x,y_i):i\in\alpha)$, $(b_i:i\in\alpha)$ with $a$
    		  realizing $\{\varphi_i(x,b_i):i\in\alpha\}$ and $\alpha<\kappa$.
    		Let $B_0=\{b_i:i\in\alpha\}$. 
    		Then $a\nind^d_{AB_0}B$ implies that there exists $\varphi_{\alpha}(x,y_{\alpha})$ with $b_{\alpha}\in B$
    		  such that $\models\varphi_{\alpha}(a,b_{\alpha})$ and $\varphi_{\alpha}(x,b_{\alpha})$ divides over
    		  $AB_0=Ab_{<\alpha}$.
    
    	$2\Rightarrow 1$.	
    	    Assume that $1$ is false and  let $(\varphi_i(x,y_i):i\in\kappa)$, $(b_i:i\in\kappa)$ be a dividing
    	      sequence in $p$ over $A$.
    		Let $a\models p(x)\cup\{\varphi_i(x,b_i):i\in\kappa\}$.
    		Let $B=A\{b_i:i\in\kappa\}$ and $B_0\subseteq B$ with $|B_0|<\kappa$.
    		As $\kappa$ is regular, there is some $i\in\kappa$ such that $B_0\subseteq Ab_{<i}$. 
    		As $\varphi_i(x,b_i)$ divides over $Ab_{<i}$, it also divides over  $AB_0$ and therefore
    		  $a\nind^d_{AB_0}B$.
	\end{proof}

	From this lemma we get:
	\begin{corollary}
		Let $T$ be a theory and $p$ a partial type.
		Then,
		\begin{enumerate}
			\item $T$ is simple if and only if $DD(x=x)<\infty$.
			\item $T$ is supersimple if and only if $DD(x=x)<\omega_+$.			
			\item $p$ is simple if and only if $DD(p)<\infty$.
		\end{enumerate}
	\end{corollary}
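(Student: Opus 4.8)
The plan is to read each equivalence off Lemma~\ref{DDEqCardinals} after using Remark~\ref{DDInf} to replace the condition $DD<\infty$ by one involving a concrete regular cardinal, and then to recognize the resulting statement as a standard local-character characterization of simplicity.

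First, for item $1$, I would observe that by Remark~\ref{DDInf} we have $DD(x=x)<\infty$ if and only if $DD(x=x)<(|T|^+)_+$: any tree of depth $|T|^+$ forces $DD=\infty$, while a rank below $(|T|^+)_+$ is bounded and hence $<\infty$. Since $|T|^+$ is regular, I would then apply Lemma~\ref{DDEqCardinals} with $A=\emptyset$, $p(x)=(x=x)$ and $\kappa=|T|^+$. This turns $DD(x=x)<(|T|^+)_+$ into the assertion that for every $B$ and every $a$ there is $B_0\subseteq B$ with $|B_0|\leq|T|$ and $a\ind^d_{B_0}B$; that is, dividing has local character with bound $|T|^+$. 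This last condition is exactly the standard characterization of simplicity (see, e.g., Casanovas~\cite{Casanovas11}), which yields item $1$.

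For item $2$, I would run the same argument with $\kappa=\aleph_0$, which is also regular. Lemma~\ref{DDEqCardinals} then gives that $DD(x=x)<\omega_+$ holds if and only if for every $B$ and every $a$ there is a \emph{finite} $B_0\subseteq B$ with $a\ind^d_{B_0}B$, i.e.\ dividing has finite local character. Since $\omega_+\leq(|T|^+)_+$, this condition already forces $DD(x=x)<\infty$, hence simplicity by item $1$, and finite local character of dividing is precisely what distinguishes supersimplicity among simple theories; so item $2$ follows. Item $3$ is then the relativization of item $1$ to an arbitrary partial type $p$ over $A$: Remark~\ref{DDInf} gives $DD(p)<\infty$ iff $DD(p)<(|T|^+)_+$, and Lemma~\ref{DDEqCardinals} with $\kappa=|T|^+$ rewrites this as the statement that for every $B\supseteq A$ and every $a\models p$ there is $B_0\subseteq B$ with $|B_0|\leq|T|$ and $a\ind^d_{AB_0}B$, which is the local-character condition defining a simple type.

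The step I expect to require the most care is the translation between the local-character-of-dividing statements produced by the Lemma and the official definitions of simple and supersimple theory and of a simple type, since those are sometimes phrased in terms of forking rather than dividing. Here one relies on the standard fact that simplicity is equivalent to local character of dividing with bound $|T|^+$ (and that in this setting dividing and forking agree), so no genuinely new work is needed beyond citing it.
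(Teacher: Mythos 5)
Your proposal is correct and follows essentially the same route as the paper: the paper derives the corollary directly from Lemma~\ref{DDEqCardinals} (with $\kappa=|T|^+$ and $\kappa=\aleph_0$), treating the resulting local-character statements as immediate from the standard definitions of simple and supersimple theories in Casanovas/Kim and from Chernikov's equivalent definitions of a simple type. Your only addition is to make explicit the use of Remark~\ref{DDInf} to pass between $DD<\infty$ and $DD<(|T|^+)_+$, which the paper leaves implicit but clearly intends.
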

	\begin{proof}
        $1$ and $2$ are immediate from the definitions of simple and supersimple theories
          (Casanovas\cite{Casanovas11} or Kim\cite{Kim}).
        $3$ is immediate from the equivalences of the definition of a simple type in Chernikov\cite{Chernikov}.
	\end{proof}

	\begin{remark}
	    In C\'ardenas, Farr\'e\cite{CardenasFarre} is analyzed which is the natural definition of a supersimple type
	      and it is proved that $p$ supersimple is equivalent to $DD(p)<\omega_+$
	\end{remark}
	
	Following, a result about burden and strongness.
	\begin{remark}\label{bdnDD}
        Here, we compare $DD$-rank with the burden defined by Adler. 
        See Adler\cite{Adler} for the definitions of $inp$-pattern and burden ($bdn$).
        The existence of an $inp$-pattern of depth $\kappa$ in $p$ is equivalent to the existence of a tree of depth
          $\kappa$ in $p$ with the following restriction: $a_{s,i}=a_{t,i}$ for $|s|=|t|$.
	    Therefore if it exists an $inp$-pattern of depth $\kappa$ in $p$, then $\kappa_+\leq DD(p)$.
	    So, in general $bdn(p)\leq DD(p)$ and if it exists an $inp$-pattern of depth $bdp(p)$ in $p$, then 
	      $bdn(p)_+\leq DD(p)$ (we are taking $bdn(p)$ as an ordinal for the comparison).
	    From this, we deduce the next corollary and the known fact that a supersimple theory is strong.
	\end{remark}

	\begin{corollary} 
	    A partial type $p$ with $DD(p)<\omega_+$ (supersimple) is strong 
	      (there are not $inp$-pattern in $p$ of depth $\omega$).
	\end{corollary}

	It is equivalent that dividing chains and forking chains are not bounded:
	\begin{proposition}
		Let $p$ be a partial type over a set of parameters $A$.
		The following are equivalent:
		\begin{enumerate}
			\item $DD(p)<\infty$ ($p$ is simple).
			\item There is no forking chain of complete types in $p$ of depth $|T|^+$.
		\end{enumerate}
	\end{proposition}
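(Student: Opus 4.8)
The plan is to prove the two implications separately, in both cases passing to the contrapositive. Throughout I use the inequality $DD(p)\leq FD(p,A)$ recorded just before Remark~\ref{FDEquiv}, the fact that dividing implies forking (so every dividing chain is \emph{ipso facto} a forking chain), and the already established characterization that $DD(p)<\infty$ if and only if $p$ is simple.

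For $2\Rightarrow 1$ I argue by contraposition: assuming $DD(p)=\infty$ I produce a forking chain of depth $|T|^+$. Since $DD(p)=\infty$ is unbounded, there is in particular a tree of depth $|T|^+$ in $p$, i.e. $(|T|^+)_+\leq DD(p)$; applying the equivalence $1\Leftrightarrow 5$ of Proposition~\ref{DDEquiv} with $\alpha=|T|^+$, there is a dividing chain of complete types $(p_i:i\in|T|^+)$ in $p$ over $A$. For each $i$ the type $p_i$ divides over $dom(p_{<i})$, hence forks over $dom(p_{<i})$, so the very same sequence is a forking chain of complete types of depth $|T|^+$, contradicting $2$.

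The substance is the direction $1\Rightarrow 2$, again by contraposition: from a forking chain of complete types $(p_i:i\in|T|^+)$ in $p$ over $A$ I must derive $DD(p)=\infty$. Fix $a\models p_{<|T|^+}$ and set $C_i=dom(p_{<i})$ (with $C_0=A$). For each $i$ the type $p_i$ forks over $C_i$, so some formula of $p_i$ implies a finite disjunction of formulas each dividing over $C_i$; since $a$ realizes $p_i$, at least one disjunct $\psi^i(x,e_i)$ satisfies $\models\psi^i(a,e_i)$ and $k_i$-divides over $C_i$. As there are at most $|T|$ formulas in $L$ and only finitely many possible $k_i$, and as $|T|^+$ is regular, a pigeonhole argument over the $|T|^+$ steps yields a single formula $\psi(x,y)$, a single $k<\omega$, and a set $S\subseteq|T|^+$ of size $|T|^+$ such that $\psi(x,e_i)$ $k$-divides over $C_i$ and $\models\psi(a,e_i)$ for every $i\in S$.

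I then want to assemble these instances into a dividing tree of depth $\omega$ carrying the constant data $\psi,k$: by $1\Leftrightarrow 3$ of Proposition~\ref{DDEquiv} it is enough to produce a dividing sequence of depth $\omega$ with these constant data, and once there is a tree of depth $\omega$ in which the same $\psi$ and $k$ occur at every node, the compactness argument of Remark~\ref{DDInf} delivers trees of arbitrary depth, hence $DD(p)=\infty$. The main obstacle is exactly this assembling step. Each $\psi(x,e_i)$ divides only over $C_i=dom(p_{<i})$, whereas the parameters $e_j$ produced by forking need not lie in any $C_i$; thus the base $Ae_{<n}$ of a putative dividing sequence is not contained in $C_{i_n}$, and since dividing passes only to \emph{smaller} bases (never to larger ones) the witnesses do not line up by monotonicity alone. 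Overcoming this requires the Shelah-style construction: extract for each $i\in S$ a $C_i$-indiscernible sequence witnessing the $k$-dividing of $\psi(x,e_i)$, and amalgamate these level by level by Ramsey and compactness into a tree whose branches stay consistent (witnessed by conjugates of $a$) while the immediate successors of every node remain $k$-inconsistent. This is the localized form of the implication \emph{non-simple} $\Rightarrow$ \emph{tree property}. Alternatively, the hard direction can be shortcut by combining the established equivalence $DD(p)<\infty\Leftrightarrow p$ simple with the standard characterization of a simple type by local character of forking (see Chernikov\cite{Chernikov}), which is precisely the absence of a forking chain of complete types of depth $|T|^+$.
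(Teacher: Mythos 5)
Your direction $2\Rightarrow 1$ is correct and coincides with the paper's ``immediate'' half: $DD(p)=\infty$ gives a dividing chain of complete types of depth $|T|^+$ via Proposition~\ref{DDEquiv}, and every dividing chain is a forking chain. In $1\Rightarrow 2$ your set-up is also fine up to a small slip (there are countably many candidates for $k_i$, not finitely many; the pigeonhole still works because $|T|\cdot\aleph_0=|T|<|T|^+$ and $|T|^+$ is regular). The genuine gap is the step you yourself flag: after extracting $(\psi,k,S)$ you must pass from ``$\psi(x,e_i)$ $k$-divides over $C_i$ for $i\in S$'' to unbounded $DD$, and your proposed repair --- extract a $C_i$-indiscernible witness for each $i$ and ``amalgamate level by level by Ramsey and compactness into a tree'' --- is not an argument but a restatement of the difficulty: keeping branches consistent while the dividing bases $C_i$ and the accumulated parameters $e_{<n}$ fail to line up is exactly the obstruction you diagnosed, and Ramsey applied to the individual witnessing sequences does nothing to repair the base mismatch. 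As written, the main line of your proof does not close.

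The standard proof --- the one the paper invokes by citing Proposition 4.15 of Casanovas\cite{Casanovas11} --- never assembles a dividing chain or tree from the forking data at all; it runs through the local ranks $D(\cdot,\Delta,k)$ for finite sets of formulas $\Delta$ (cf.\ Remark~\ref{DDLocal}). The key lemma is a rank drop: if $q\supseteq\pi$ forks over $dom(\pi)$, witnessed by $q\vdash\bigvee_{j<n}\psi_j(x,e_j)$ with each $\psi_j(x,e_j)$ dividing over $dom(\pi)$, then $D(q,\Delta,k)<D(\pi,\Delta,k)$ for $\Delta=\{\psi_j: j<n\}$ and suitable $k$, whenever the right-hand side is an ordinal. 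This lemma needs dividing precisely over $dom(p_{<i})$, which is exactly what the forking chain supplies, so the base-alignment problem never arises. Pigeonhole over the $|T|$-many pairs $(\Delta,k)$ along the $|T|^+$-chain yields one pair at which the rank would strictly decrease at infinitely many stages of the (rank-monotone) sequence $(p_{<i})_{i<|T|^+}$; by well-foundedness of the ordinals some $D(p_{<i},\Delta,k)=\infty$, hence $D(p,\Delta,k)=\infty$ by monotonicity, and compactness (as in Remark~\ref{DDInf}) then gives trees of every depth, i.e.\ $DD(p)=\infty$. Your fallback through local character in Chernikov\cite{Chernikov} could also work, but as written it is a second citation rather than a proof: you still owe the (easy, regularity-based) recursion showing that failure of local character is equivalent to the existence of a forking chain of complete types in the paper's exact sense, and you must check that the characterization you invoke is stated for forking --- the dividing version, which is what Lemma~\ref{DDEqCardinals} provides, only rules out dividing chains, i.e.\ the direction you already have.
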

	\begin{proof} 
		It is immediate that $2$ implies $1$. 
		In the other directions a standard proof works. See, for example, proposition 4.15 of
		  Casanovas\cite{Casanovas11}.
	\end{proof}

	\begin{remark}\label{forkingExistence}
		Let $p$ be a partial type over $A$.
		If $p$ is simple then $p$ does not fork over $A$.
		As any extension of $p$ is also simple we have the same for any partial type extending $p$.
	\end{remark}
	\begin{proof}
		If $p$ forks over $A$ then we can form a forking chain of partial types in $p$ of any length simply repeating
		  $p$ in the chain.
		So, if $p$ is simple then $p$ does not fork over $A$.
	\end{proof}
	
	Now, we are going to see that under certain hypothesis, non forking implies that $DD$ does not change.
	This is true in simple theories, but we will weaken the hypothesis.
	\begin{lemma}\label{DDeqInd}
      Let $p(x)$ be a partial type over $A$.
	  The following are equivalent:
	  \begin{enumerate}
        \item $\alpha_+\leq DD(p)$
		\item There exist $a\models p(x)$ and a sequence of parameters $(a_i:i\in\alpha)$ (with $a_i$ a finite tuple)
		  such that $a\nind^d_{Aa_{<i}}a_i$ for each $i\in\alpha$.
	  \end{enumerate}	
	  If $p(x)=tp(b/A)$, using an argument of conjugation we can assume $a=b$.
    \end{lemma}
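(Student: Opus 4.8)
The plan is to read the statement through the dividing-sequence characterization of $\alpha_+\leq DD(p)$ furnished by Proposition~\ref{DDEquiv}, item~4: recall that $\alpha_+\leq DD(p)$ holds exactly when there are formulas $(\varphi_i(x,x_i):i\in\alpha)$ and parameters $(a_i:i\in\alpha)$ with $p(x)\cup\{\varphi_i(x,a_i):i\in\alpha\}$ consistent and each $\varphi_i(x,a_i)$ dividing over $Aa_{<i}$. Both implications are then a matter of unwinding the definition of the dividing-independence relation $\nind^d$, recalling that $a\nind^d_C b$ means precisely that $tp(a/Cb)$ divides over $C$, i.e.\ that some formula of $tp(a/Cb)$ divides over $C$.

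For $1\Rightarrow 2$, I fix a dividing sequence $(\varphi_i,a_i)_{i\in\alpha}$ as above and choose $a\models p(x)\cup\{\varphi_i(x,a_i):i\in\alpha\}$, which exists by the consistency clause. For each $i$ we have $\models\varphi_i(a,a_i)$, so $\varphi_i(x,a_i)\in tp(a/Aa_{<i}a_i)$; since this formula divides over $Aa_{<i}$, the type $tp(a/Aa_{<i}a_i)$ divides over $Aa_{<i}$, which is exactly $a\nind^d_{Aa_{<i}}a_i$. As the $x_i$ are finite tuples of variables, each $a_i$ is finite, as required.

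For $2\Rightarrow 1$, I start from a witnessing $a$ and $(a_i)_{i\in\alpha}$. For each $i$, $a\nind^d_{Aa_{<i}}a_i$ gives a formula of $tp(a/Aa_{<i}a_i)$ dividing over $Aa_{<i}$; writing it as $\varphi_i(x,a_i,c_i)$ with $\varphi_i\in L(A)$ and $c_i\in a_{<i}$, I set $b_i:=a_i{}^{\frown}c_i$. Then $\models\varphi_i(a,b_i)$, and $\varphi_i(x,b_i)$ divides over $Aa_{<i}$. The key bookkeeping point is that $Ab_{<i}=Aa_{<i}$: each $c_j$ (for $j<i$) lies in $a_{<j}\subseteq a_{<i}$, so adjoining the $c_j$'s neither enlarges the base nor alters over which set the formula divides. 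Hence $(\varphi_i,b_i)_{i\in\alpha}$ is a dividing sequence of depth $\alpha$ in $p$ over $A$, with consistency witnessed by $a$, and Proposition~\ref{DDEquiv} yields $\alpha_+\leq DD(p)$. I expect this parameter-absorption step to be the only genuine subtlety; everything else is purely definitional, being just the passage between the relation $\nind^d$ and the language of dividing formulas.

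Finally, for the concluding remark, suppose $p=tp(b/A)$. Any witness $a$ of item~2 satisfies $a\equiv_A b$, so there is $\sigma\in Aut(\mathfrak{C}/A)$ with $\sigma(a)=b$. Applying $\sigma$ to the whole sequence and using that $\sigma$ fixes $A$ pointwise and that dividing is invariant under $A$-automorphisms, the images $a_i':=\sigma(a_i)$ satisfy $b\nind^d_{Aa_{<i}'}a_i'$ for every $i$; thus the witness may be taken to be $b$ itself.
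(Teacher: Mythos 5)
Your proof is correct and takes essentially the same route as the paper: both directions are handled by unwinding $\nind^d$ against the equivalences of Proposition~\ref{DDEquiv} (you enter via the dividing-sequence item~4 where the paper uses the dividing chain of partial types, item~6 --- an immaterial difference), and your parameter-absorption step $Ab_{<i}=Aa_{<i}$ is exactly the paper's $c_i=b_ia_i$ bookkeeping, with the $L(A)$-formulas licensed by the paper's remark after the definition of dividing sequence. Your explicit automorphism argument for the final remark just fills in a detail the paper only asserts.
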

	\begin{proof}
		Let $(p_i:i\in\alpha)$ and  $(A_i:i\in\alpha)$ be a dividing chain of partial types.
		Let $a\models p_{<\alpha}$.
		For each $i\in\alpha$ we can take $\varphi_i(x,a_i)$ that divides over $A_{<i}$ and
		  $p_i\vdash\varphi_i(x,a_i)$.
		We may assume that $a_i\in A_i$. 
		So for each $i\in\alpha$, $\models\varphi_i(a,a_i)$ and $\varphi_i(x,a_i)$ divides over $Aa_{<i}$.
		That is, for each $i\in\alpha, a\nind^d_{Aa_{<i}}a_i$.
		
		In the other direction, from $a\nind^d_{Aa_{<i}}a_i$ we obtain for each $i\in\alpha$ a formula
		  $\varphi_i(x,b_ia_i)$ with $b_i\subseteq Aa_{<i}$ that divides over $Aa_{<i}$ and
		  $\models\varphi_i(a,b_ia_i)$.
		Let $c_i=b_ia_i$.
		Then, $Ac_{<i}=Aa_{<i}$.
		Therefore, $(\varphi_i(x,c_i):i\in\alpha)$ is a dividing sequence in $p$.
	\end{proof}

	\begin{proposition}\label{DD_Dividing}
		Let $p(x)\in S(A)$ and $A\subseteq B$ with $tp(B/A)$ simple and co-simple. 
		Let $q(x)\supseteq p(x)$ be a partial type over $B$.
		If $q(x)$ does not fork over $A$ then $DD(q)=DD(p)$.
	\end{proposition}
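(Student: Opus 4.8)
The plan is to prove the two inequalities $DD(q)\le DD(p)$ and $DD(p)\le DD(q)$ separately. The first is immediate and requires no hypothesis: by equivalence $4$ of Proposition~\ref{DDEquiv} it suffices to see that a dividing sequence in $q$ over $B$ of depth $\alpha$ is also one in $p$ over $A$. Indeed its formulas divide over sets $Ba_{<i}\supseteq Aa_{<i}$, and since a dividing witness over a set is a fortiori an indiscernible witness over any smaller set, each formula divides over $Aa_{<i}$ too; consistency with $q$ gives consistency with $p$ because $q\vdash p$. So the whole content is the reverse inequality, i.e.\ to show $\alpha_+\le DD(p)\Rightarrow\alpha_+\le DD(q)$ for every ordinal $\alpha$. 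Throughout I work with Lemma~\ref{DDeqInd}, reading ``$\alpha_+\le DD$'' as the existence of a realization together with a sequence witnessing successive dividing.

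First I would fix the realization. Since $q$ does not fork over $A$, choose $a\models q$ with $a\ind^f_A B$; as $q\supseteq p$ and $p\in S(A)$ is complete, $a\models p$. Applying Lemma~\ref{DDeqInd} to $p=tp(a/A)$ (and the conjugation remark, which lets me keep this same $a$), I obtain $(a_i:i\in\alpha)$ with $a\nind^d_{Aa_{<i}}a_i$ for each $i$. The next step is to make this witnessing sequence independent from $B$ over $Aa$ \emph{without disturbing $a$}. Here simplicity enters: since $tp(B/Aa)\vdash tp(B/A)$, Proposition~\ref{basicPropDD}(2) gives $DD(tp(B/Aa))\le DD(tp(B/A))<\infty$, so $tp(B/Aa)$ is again simple and hence, by Remark~\ref{forkingExistence} together with the extension property for simple types, admits a nonforking extension over the sequence. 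Realizing it yields $B''\equiv_{Aa}B$ with $B''\ind^f_{Aa}(a_i)_{i\in\alpha}$; applying an automorphism in $\mathrm{Aut}(\mathfrak{C}/Aa)$ sending $B''$ to $B$ moves the \emph{sequence} instead, so I keep $a\models q$ and $a\ind^f_A B$ fixed and may now assume $B\ind^f_{Aa}(a_i)_{i\in\alpha}$ as well. Using symmetry of $\ind^f$ with $B$ on one side (this is where co-simplicity of $tp(B/A)$, descending to $tp(B/C)$ for $C\supseteq A$, is used) together with transitivity, I upgrade $a\ind^f_A B$ and $B\ind^f_{Aa}(a_i)$ to $B\ind^f_A a(a_i)_{i\in\alpha}$, whence by base monotonicity $a_i\ind^f_{Aa_{<i}}B$ for every $i$.

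The heart of the argument, and the step I expect to be the main obstacle, is the transfer of the dividing itself from the base $Aa_{<i}$ to the larger base $Ba_{<i}=Aa_{<i}\cup B$. From $a\nind^d_{Aa_{<i}}a_i$ I have a formula over $Aa_{<i}a_i$ satisfied by $a$ that divides over $Aa_{<i}$, and I must produce a formula over $Ba_{<i}a_i$ satisfied by $a$ dividing over the enlarged base; this is precisely the direction that fails without a hypothesis. The plan is to exploit the independence $a_i\ind^f_{Aa_{<i}}B$ just produced to carry the $Aa_{<i}$-indiscernible sequence that witnesses dividing to one that is indiscernible over $Ba_{<i}$ while preserving its $k$-inconsistency. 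The simplicity of $tp(B/Aa_{<i})$ (again descending from that of $tp(B/A)$ via Proposition~\ref{basicPropDD}(2)) is what guarantees that forking and dividing coincide on the $B$-side and that such a $B$-independent indiscernible sequence can be found, giving $a\nind^d_{Ba_{<i}}a_i$ for every $i$.

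Finally, reading Lemma~\ref{DDeqInd} over the base $B$, the pair $\big(a,(a_i)_{i\in\alpha}\big)$ with $a\models q$ and $a\nind^d_{Ba_{<i}}a_i$ witnesses $\alpha_+\le DD(q)$. Thus $\alpha_+\le DD(p)\Rightarrow\alpha_+\le DD(q)$ for every $\alpha$, and combined with the easy inequality one gets $\alpha_+\le DD(p)\Leftrightarrow\alpha_+\le DD(q)$ for all $\alpha$; this equivalence pins down the value together with the $\alpha_\pm$ bookkeeping of Definition~\ref{DDDef}, yielding $DD(p)=DD(q)$.
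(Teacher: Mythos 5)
Your architecture is the same as the paper's up to the final step, and most of it is sound: the easy inequality $DD(q)\leq DD(p)$ via monotonicity of dividing in the base, fixing $a\models q$ with $a\ind^f_A B$, moving the witnessing sequence by an automorphism over $Aa$ so that $B\ind^f_{Aa}a_{<\alpha}$ (the paper does the same bookkeeping with $c$, $B'$, $B''$ and $a'_{<\alpha}$ instead of keeping $a$ fixed, which is equivalent), and the chain co-simplicity $\Rightarrow$ $a_{<\alpha}\ind^f_{Aa}B$, left transitivity $\Rightarrow$ $aa_{<\alpha}\ind^f_A B$, simplicity $\Rightarrow$ $B\ind^f_A aa_{<\alpha}$ is exactly the paper's. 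One slip here: base monotonicity from $B\ind^f_A aa_{<\alpha}$ yields $B\ind^f_{Aa_{<i}}a_i$, with $B$ on the \emph{left}; your flipped form $a_i\ind^f_{Aa_{<i}}B$ would cost yet another symmetry application, and it is the $B$-left form that is actually needed below.

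The genuine gap is precisely at the step you yourself flag as the main obstacle. You propose to transfer dividing from the base $Aa_{<i}$ to $Ba_{<i}$ directly, by producing a $Ba_{<i}$-indiscernible inconsistent sequence of conjugates of $a_i$, and you justify this by asserting that simplicity of $tp(B/Aa_{<i})$ ``guarantees that such a $B$-independent indiscernible sequence can be found.'' The fact you are gesturing at --- if $d\ind_C B$ and $\varphi(x,d)$ divides over $C$, then $\varphi(x,d)$ divides over $CB$ --- is a theorem of \emph{simple theories}, proved by applying Kim's lemma to a Morley sequence in $tp(d/CB)$; in the present setting neither the theory nor $tp(a_i/Aa_{<i})$ is assumed simple, only the type of $B$ is, and simplicity of $tp(B/\cdot)$ gives no Kim's lemma and no control over indiscernible sequences of conjugates of $a_i$. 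So as written this step is asserted, not proved, and the stated tools do not yield it. The paper avoids the direct transfer entirely by contraposition: if $a\ind^d_{Ba_{<i}}a_i$ held, then since $B\ind^f_{Aa_{<i}}a_i$ implies $B\ind^d_{Aa_{<i}}a_i$, left transitivity of non-dividing --- which holds in \emph{every} theory --- would give $Ba\ind^d_{Aa_{<i}}a_i$ and hence $a\ind^d_{Aa_{<i}}a_i$, contradicting the choice of the sequence; thus $a\nind^d_{Ba_{<i}}a_i$ for all $i$, and Lemma~\ref{DDeqInd} over $B$ gives $\alpha_+\leq DD(\bar{q})\leq DD(q)$. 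You have already established all the independences this contraposition needs, so replacing your hand-built witness by this two-line transitivity argument closes the proof; without that replacement, the decisive step fails.
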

	\begin{proof}
		We have to prove that $DD(p)\leq DD(q)$.
	    Let $\alpha_+\leq DD(p)$, so there exists $a\models p$ and a sequence $(a_i:i\in\alpha)$ such that for every
	      $i\in\alpha, a\nind^d_{Aa_{<i}}a_i$.
		Since $q$ does not fork over $A$, we can choose a completion $\bar{q}\in S(B)$ of $q$ such that $\bar{q}$
		  does not fork over $A$.
		Let $c\models\bar{q}$ and let $B'$ be such that $cB\equiv_A aB'$.
		
		As $tp(B/A)$ is simple, it is also simple $tp(B'/Aa)$. 
		By Remark~\ref{forkingExistence}, we can choose $B''$ such that $B''\equiv_{Aa}B'$ and
		  $B''\ind^f_{Aa}a_{<\alpha}$.
		Composing automorphisms we have $cB\equiv_A aB''$. 
		Let $(a'_i:i\in\alpha)$ be such that $cBa'_{<\alpha}\equiv_A aB''a_{<\alpha}$.
		So we have $B\ind^f_{Ac}a'_{<\alpha}$ and for every $i\in\alpha$, $c\nind^d_{Aa'_{<i}}a'_i$.

		By definition $6.7$ of Chernikov\cite{Chernikov}, as $tp(B/Ac)$ is co-simple we obtain
		  $a'_{<\alpha}\ind^f_{Ac}B$.
		As $\bar{q}$ does not fork over $A$, we have $c\ind^f_A B$, by left transitivity we have 
		$ca'_{<\alpha}\ind^f_A B$. 
		By definition $6.1$ of Chernikov\cite{Chernikov}, since  $tp(B/A)$ simple, $B\ind^f_A ca'_{<\alpha}$.
		Finally, $B\ind^f_{Aa'_{<i}}a'_i$ for every $i\in\alpha$.
		
		From this, we obtain for every $i\in\alpha, c\nind^d_{Ba'_{<i}}a'_i$.
		Otherwise $c\ind^d_{Ba'_{<i}}a'_i$ and $B\ind^d_{Aa'_{<i}}a'_i$ implies by left transitivity that
		  $Bc\ind^d_{Aa'_{<i}}a'_i$ and therefore immediately $c\ind^d_{Aa'_{<i}}a'_i$ which is contradictory.
		So we have $\alpha_+\leq DD(\bar{q})\leq DD(q)$.
	\end{proof}

    Now, we will see that given a type, we can get a subtype of bounded size with the same $DD$-rank and that given a
      simple type, we can get a simple subtype of bounded size.
    \begin{proposition}
        Let $p(x)$ be a partial type over $A$. Then,
        \begin{enumerate}
            \item If $p$ is simple, there exists a set of parameters $B\subseteq A$ such that $|B|\leq|T|$ and
              $p\upharpoonright B$ is simple.
            \item There exists a set of parameters $B\subseteq A$ such that $|B|\leq|T|^{|DD(p)|}$ and
              $DD(p\upharpoonright B)=DD(p)$.
        \end{enumerate}  
    \end{proposition}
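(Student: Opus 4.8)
The plan is to argue directly from Definition~\ref{DDDef}, exploiting that passing from $p$ to a restriction $p\upharpoonright B$ only weakens the branch-consistency clause (b) of a tree, while clause (a) (the $k_{|s|}$-inconsistency of rows) is untouched. First I record the trivial inequality: if $B\subseteq A$, then any tree $(a_{s,i}),(\varphi_j),(k_j)$ witnessing a depth for $p$ is also a tree for $p\upharpoonright B$, since clause (a) is unchanged and clause (b) only becomes easier as $p\upharpoonright B\subseteq p$. Hence $DD(p)\le DD(p\upharpoonright B)$ for every $B\subseteq A$ (this also follows from Proposition~\ref{basicPropDD}, item $2$). In particular, when $DD(p)=\infty$ the second statement is free: taking $B=\emptyset$ gives $DD(p\upharpoonright\emptyset)\ge DD(p)=\infty$, so equality holds with $|B|=0$. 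Thus the whole content is the reverse inequality, namely producing a \emph{small} $B$ for which no tree in $p\upharpoonright B$ is deeper than $DD(p)$.

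For statement $1$ the point is that simplicity is a \emph{local} property. By the proof of Remark~\ref{DDInf} together with Remark~\ref{DDLocal}, $p$ is non-simple exactly when for some single pair $(\varphi,k)$ there is a tree of depth $\omega$ with constant formula $\varphi$ and constant $k$ whose branches are consistent with $p$; equivalently, some local rank $D(p,\varphi,k)$ is infinite. So assume $p$ is simple, i.e.\ $D(p,\varphi,k)<\omega$ for every pair $(\varphi,k)$. By the standard continuity of the local rank (a compactness argument), for each pair there is a \emph{finite} subtype $p_{\varphi,k}\subseteq p$ with $D(p_{\varphi,k},\varphi,k)=D(p,\varphi,k)<\omega$. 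Put $B=\bigcup_{\varphi,k}\mathrm{dom}(p_{\varphi,k})$; since there are $|T|$ pairs and each $p_{\varphi,k}$ contributes finitely many parameters, $|B|\le|T|$. As $p_{\varphi,k}\subseteq p\upharpoonright B\subseteq p$, monotonicity of the local rank gives $D(p\upharpoonright B,\varphi,k)=D(p,\varphi,k)<\omega$ for all $(\varphi,k)$, so $p\upharpoonright B$ is simple.

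For statement $2$ I would run the same idea one level up, indexing by whole \emph{formula-templates} rather than single formulas. Set $\alpha=DD(p)$, which by the $\infty$ case we may assume is a genuine ordinal. Choose $\delta=\alpha+1$ if a tree of depth $\alpha$ exists in $p$ (the successor or $\alpha_+$ case) and $\delta=\alpha$ if $DD(p)=\alpha_-$; in either case $p$ admits no tree of depth $\delta$. Call a \emph{template} a sequence $\bar\varphi=((\varphi_i,k_i):i<\delta)$ of pairs; there are at most $|T|^{|\delta|}=|T|^{|\alpha|}$ of them. For a fixed $\bar\varphi$, the assertion ``there is a tree of depth $\delta$ with these $\varphi_i,k_i$ all of whose branches are consistent with $q$'' is the consistency of a single type in the tree-parameter variables together with one realizing variable per branch; since this fails for $q=p$, compactness yields a \emph{finite} $p_{\bar\varphi}\subseteq p$ for which it already fails. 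Let $B=\bigcup_{\bar\varphi}\mathrm{dom}(p_{\bar\varphi})$; with $\le|T|^{|\alpha|}$ templates, each contributing finitely many parameters, $|B|\le|T|^{|\alpha|}$. If now $p\upharpoonright B$ had a tree of depth $\delta$, its formulas and bounds would form some template $\bar\varphi$; but $p_{\bar\varphi}\subseteq p\upharpoonright B$, contradicting the choice of $p_{\bar\varphi}$. Hence $p\upharpoonright B$ has no tree of depth $\delta$; together with the easy inequality—which transports to $p\upharpoonright B$ all trees of depth $<\alpha$, and the depth-$\alpha$ tree in the achieved case—this pins $DD(p\upharpoonright B)$ to exactly $DD(p)$, with the same $\pm$ decoration.

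The main obstacle is the finite-determination step for a fixed template, i.e.\ the compactness argument of the previous paragraph. One must write the existence of a depth-$\delta$, $\bar\varphi$-labelled tree consistent with $q$ as a single consistency statement and then check that a finite inconsistent subset, which mentions only finitely many nodes, finitely many branches, and finitely many formulas of $p$, already forbids \emph{every} full tree of that shape, since a full tree consistent with $p_{\bar\varphi}$ would realize that finite subset by restriction. A second, purely bookkeeping, delicacy is the rank's $\pm$ decoration: $\delta$ must be matched to the $\alpha_-$ versus $\alpha_+$ (or successor) status of $DD(p)$ so as to block precisely the trees strictly deeper than $DD(p)$ while keeping those realizing the value $DD(p)$. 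The degenerate case $\alpha=0$ (algebraic $p$) is handled separately, where a single algebraicity formula already yields a finite $B$.
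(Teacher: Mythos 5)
Your proof is correct and follows essentially the same route as the paper: both express the existence of a depth-$\omega$ tree for a fixed pair $(\varphi,k)$ (for item 1) and of a depth-$\delta$ tree for a fixed template $(\varphi_i,k_i)_{i<\delta}$ (for item 2) as the consistency of a single type, apply compactness to each of the $|T|$, respectively $|T|^{|DD(p)|}$, inconsistent such types to extract finite subtypes, and take the union of their parameter sets. Your detour through the local rank $D(p,\varphi,k)$ in item 1 and your explicit bookkeeping of the $\alpha_-$ versus $\alpha_+$ decoration in item 2 are only more detailed renderings of steps the paper leaves implicit.
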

    \begin{proof} 
        \emph{1}. 
            $p$ simple means that there is no $\varphi(x,y)$, $k<\omega$ and
              $(a_{s,i}: s\in\omega^{<\omega},i\in\omega)$ such that for each $s\in\omega^{<\omega}$,
              $\{\varphi(x,a_{s,i}):i\in\omega\}$ is $k$-inconsistent and such that for each
              $f:\omega\longrightarrow\omega$, $p(x)\cup\{\varphi(x,a_{f\upharpoonright i,f(i)}):i\in\omega\}$ is
              consistent.
            Then, we consider the type $\Sigma_{p,\varphi,k}$ in the variables
              $(x_{s,i}: s\in\omega^{<\omega},i\in\omega)$ that express the previous condition of consistency and
              $k$-inconsistency for $p$ and $\varphi$. 
            That is, $p$ is simple iff for every $\varphi(x,y)$ and $k<\omega$, $\Sigma_{p,\varphi,k}$ is
              inconsistent.
            As $p$ is simple, by compactness, for each $\Sigma_{p,\varphi,k}$ there is some finite
              $A_{\varphi,k}\subseteq A$ such that $\Sigma_{p\upharpoonright A_{\varphi,k},\varphi,k}$ is
              inconsistent. 
            Taking $B=\bigcup_{\varphi,k} A_{\varphi,k}$ we get $\Sigma_{p\upharpoonright B,\varphi,k}$ are
              inconsistent. 
            We are using that $p\subseteq q$ implies $\Sigma_{p,\varphi,k}\subseteq \Sigma_{q,\varphi,k}$.
                
        \emph{2}.  
            If $DD(p)=\infty$ is trivial, so we can assume $DD(p)<\infty$.
            In a similar way, given $\alpha$, for every sequence $\overline{\varphi}=(\varphi_j(x,y_j):j\in\alpha)$
              and every sequence $\overline{k}=(k_j:j\in\alpha)$ we can define types
              $\Sigma_{p,\alpha,\overline{\varphi},\overline{k}}$ whose inconsistency guarantees that
              $DD(p)\leq\alpha$. 
            So, in a similar way, we can obtain a subtype $q$ satisfying $DD(p)=DD(q)$.
    \end{proof}

	\begin{proposition}
		Let $p(x)$ be a partial type over $A$ with $DD(p)\geq\alpha_+\geq\omega_+$.
		\begin{enumerate}
            \item There exists $B\supseteq A$ and a completion $q\in S(B)$ of $p$ dividing over $A$ with $DD(q)\geq\alpha_+$.
		    \item If $M\supseteq A$ is an $|A|^+$-saturated model, there exists a completion $q\in S(M)$ of $p$ dividing over $A$ with $DD(q)\geq\alpha_+$.
		\end{enumerate}
	\end{proposition}
	\begin{proof}
		\emph{1} is immediate by equivalence $5$ of Proposition~\ref{DDEquiv}, taking the first type in the dividing chain.
		
		Let $M\supseteq A$ be an $|A|^+$-saturated model.
		We take a tree of parameters $(a_{s,i}:s<\omega^{<\alpha},i\in\omega)$, a sequence of formulas $\overline{\varphi}=(\varphi_j(x,y_j):j\in\alpha)$ and a sequence of numbers $\overline{k}=(k_j:j\in\alpha)$ witnessing $DD(p)\geq\alpha_+$, according to definition~\ref{DDDef}. As in the proof of the previous proposition, we can build a type $\Sigma_{p,\alpha,\overline{\varphi},\overline{k}}$ over $A$  in the variables $(z_{s,i}:s<\omega^{<\alpha},i\in\omega)$ whose consistency guarantees that $DD(p)\geq\alpha_+$.
		
		Denoting by $z=(z_{s,i}:s<\omega^{<\alpha},i\in\omega)-(z_{\emptyset,0})$, the type 
		  $\exists z\left(\Sigma_{p,\alpha,\overline{\varphi},\overline{k}}\right)$ has a realization in $M$, so we have some new parameters $(b_{s,i}:s<\omega^{<\alpha},i\in\omega)$ realizing the type
          $\Sigma_{p,\alpha,\overline{\varphi},\overline{k}}$, where $b_{\emptyset,0}\in M$.
        Taking a realization $a$ of $p$ and a tree branch including the $b_{\emptyset,0}$, we have $DD(a/Ab_{\emptyset,0})\geq\alpha_+$. By remark~\ref{DDsupCT} we can extend $tp(a/Ab_{\emptyset,0})$ to a type $q$ over $M$ with $DD(q)\geq\alpha_+$.
	\end{proof}

    \begin{corollary}
        Let $p(x)$ be a non-simple partial type over $A$. Then,
		\begin{enumerate}
            \item There exists $B\supseteq A$ and a completion $q\in S(B)$ of $p$ dividing over $A$ with $q$ non-simple.
		    \item If $M\supseteq A$ is an $|A|^+$-saturated model, there exists a completion $q\in S(M)$ of $p$ dividing over $A$ with $q$ non-simple.
		\end{enumerate}        
    \end{corollary}

\section{Additivity}
	We are going to see that the $DD$ rank under certain hypotheses, satisfies Lascar-style inequalities.
	In the next lemma $otp(S)$ denotes the order type of a well-ordered set $S$.
	That is, the ordinal order-isomorphic to $S$.
    
	\begin{remark}
		Let $p$ be a partial type over $A$ and $a\models p$.
		For any sequence $(a_i:i<\alpha)$ if we denote $\beta=otp\{i\in\alpha:a\nind^d_{Aa_{<i}}a_i\}$, 
		  then $\beta_+\leq DD(p)$.
	\end{remark}
	\begin{proof}
	  Let $f:\beta\longrightarrow\{i\in\alpha:a\nind^d_{Aa_{<i}}a_i\}$ be the order isomorphism.
      Let $b_i=a_{f(i)}$ for $i\in\beta$. As $Ab_{<i}\subseteq Aa_{<f(i)}$ and $a\nind^d_{Aa_{<f(i)}}a_{f(i)}$, 
        it holds  $a\nind^d_{Ab_{<i}}b_i$ for each $i\in\beta$.
	\end{proof}
  
	\begin{proposition}\label{DD_Add_Right}
		If $\alpha_+\leq DD(ab/A)$, then there exist ordinals $\beta$ and $\gamma$ such that 
		  $\alpha\leq\beta\oplus\gamma$ and $\beta_+\leq DD(a/A)$ and $\gamma_+\leq DD(b/Aa)$.
	\end{proposition}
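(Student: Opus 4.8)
The plan is to reduce to the dividing-witness form provided by Lemma~\ref{DDeqInd} and then split the witnessing sequence according to which of the two coordinates $a$, $b$ is responsible for each instance of dividing. Since $tp(ab/A)$ is complete, Lemma~\ref{DDeqInd} (using its conjugation clause, so that the realization is $ab$ itself) turns $\alpha_+\leq DD(ab/A)$ into a sequence $(c_i:i\in\alpha)$ with $ab\nind^d_{Ac_{<i}}c_i$ for every $i\in\alpha$. I then partition $\alpha=S\sqcup S'$, where $S=\{i\in\alpha: a\nind^d_{Ac_{<i}}c_i\}$ and $S'=\{i\in\alpha: a\ind^d_{Ac_{<i}}c_i\}$, and set $\beta=otp(S)$ and $\gamma=otp(S')$.

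The key step is to show that on $S'$ the dividing is carried by $b$ over the enlarged base $Aa$. Fix $i\in S'$, so $a\ind^d_{Ac_{<i}}c_i$ while $ab\nind^d_{Ac_{<i}}c_i$. Left transitivity of $\ind^d$ (the same property already invoked in the proof of Proposition~\ref{DD_Dividing}) forces $ab\nind^d_{Aac_{<i}}c_i$; and since $a$ lies in the base $Aac_{<i}$, left monotonicity gives $b\nind^d_{Aac_{<i}}c_i$. By definition $S=\{i:a\nind^d_{Ac_{<i}}c_i\}$, and we have just shown $S'\subseteq\{i:b\nind^d_{Aac_{<i}}c_i\}$. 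Applying the remark immediately preceding this proposition to $tp(a/A)$ (with realization $a$ and the sequence $(c_i)$) yields $\beta_+\leq DD(a/A)$. Applying it to $tp(b/Aa)$ (with realization $b$, base $Aa$, and the same sequence) yields $\gamma'_+\leq DD(b/Aa)$, where $\gamma'=otp\{i\in\alpha: b\nind^d_{Aac_{<i}}c_i\}$; since $\gamma=otp(S')\leq\gamma'$, truncating a dividing sequence of depth $\gamma'$ to its first $\gamma$ terms gives $\gamma_+\leq DD(b/Aa)$.

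Finally, the inequality $\alpha\leq\beta\oplus\gamma$ is pure ordinal combinatorics: the natural (Hessenberg) sum $\beta\oplus\gamma$ is exactly the maximal order type of a linear order partitioned into two subsets of order types $\beta$ and $\gamma$, so the partition $\alpha=S\sqcup S'$ gives $\alpha=otp(\alpha)\leq otp(S)\oplus otp(S')=\beta\oplus\gamma$. The main obstacle is the attribution step in the second paragraph: one must be certain that left transitivity of dividing is genuinely available (it is, holding in arbitrary theories and already used in Proposition~\ref{DD_Dividing}), and one must correctly pass from $ab\nind^d_{Aac_{<i}}c_i$ to $b\nind^d_{Aac_{<i}}c_i$ by exploiting that $a$ already belongs to the base. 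Everything else is bookkeeping with the preceding remark and the defining property of $\oplus$.
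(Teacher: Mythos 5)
Your proof is correct and takes essentially the same route as the paper: the paper likewise starts from the witnessing sequence of Lemma~\ref{DDeqInd}, splits the index set via the disjunction $ab\nind^d_{Aa_{<i}}a_i \Rightarrow a\nind^d_{Aa_{<i}}a_i$ or $b\nind^d_{Aaa_{<i}}a_i$ (the left-transitivity/normality argument you spell out), applies the preceding remark to each piece, and concludes with the same Hessenberg-sum bound $\alpha\leq otp(X)\oplus otp(Y)$. Your only deviations are cosmetic: you make the two index sets disjoint (the paper allows them to overlap, which is harmless) and add an explicit truncation step.
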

	\begin{proof}
  	    Let $(a_i:i\in\alpha)$ such that $ab\nind^d_{Aa_{<i}}a_i$ for each $i\in\alpha$.
		We have $ab\nind^d_{Aa_{<i}}a_i$ implies $a\nind^d_{Aa_{<i}}a_i$ or $b\nind^d_{Aaa_{<i}}a_i$.
		Let $X=\{i\in\alpha:a\nind^d_{Aa_{<i}}a_i\}$, $Y=\{i\in\alpha:b\nind^d_{Aaa_{<i}}a_i\}$. 
		So, $\alpha=X\cup Y$ and therefore $\alpha\leq otp(X)\oplus otp(Y)$. 
		  By previous remark, $otp(X)_+\leq DD(a/A)$ and $otp(Y)_+\leq DD(b/Aa)$.
	\end{proof}

	\begin{definition}
		We recall that a theory is called \textbf{Extensible} if forking has existence, that is every complete type
		  does not fork over its parameters.
		For instance, simple theories are extensible.
	\end{definition}
	
	\begin{proposition}\label{DD_Add_Left}
		Let $T$ be a $NTP_2$ and extensible theory.
		If $\alpha_+\leq DD(a/A)$ and $\beta_+\leq DD(b/Aa)$, then $(\alpha+\beta)_+\leq DD(ab/A)$.
	\end{proposition}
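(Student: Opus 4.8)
The plan is to translate the statement into the language of the independence relation and then build a single dividing sequence for $ab$ of order type $\alpha+\beta$ by concatenating a sequence for $a/A$ with a sequence for $b/Aa$. First I would invoke Lemma~\ref{DDeqInd}: from $\alpha_+\le DD(a/A)$ I obtain parameters $(a_i:i\in\alpha)$ with $a\nind^d_{Aa_{<i}}a_i$ for every $i$, and from $\beta_+\le DD(b/Aa)$ parameters $(b_j:j\in\beta)$ with $b\nind^d_{Aab_{<j}}b_j$ for every $j$. Because $T$ is $NTP_2$ and extensible, every set is an extension base and hence forking coincides with dividing over every set (by the results of Chernikov\cite{Chernikov}); throughout the argument I may therefore freely replace $\nind^d$ by $\nind^f$ and use the forking calculus. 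The goal, again by Lemma~\ref{DDeqInd} applied to $tp(ab/A)$, is a single sequence $(c_k:k\in\alpha+\beta)$ with $ab\nind^f_{Ac_{<k}}c_k$.

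The concatenation places the $a$-part first and the $b$-part second, so that $c_i=a_i$ for $i<\alpha$ and $c_{\alpha+j}=b_j$ for $j<\beta$. The $a$-part is immediate: from $a\nind^f_{Aa_{<i}}a_i$ and left monotonicity of non-forking we get $ab\nind^f_{Aa_{<i}}a_i$, and $Ac_{<i}=Aa_{<i}$. The whole difficulty is concentrated in the $b$-part, where the base occurring in the combined sequence is $Ac_{<\alpha+j}=Aa_{<\alpha}b_{<j}$, whereas the division of $b$ by $b_j$ is only known over $Aab_{<j}$. Thus I must transfer the dividing of $b_j$ from the base $Aab_{<j}$ to the base $Aa_{<\alpha}b_{<j}$, i.e.\ replace the single element $a$ by the sequence $a_{<\alpha}$. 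To make the two bases comparable I would first force $a$ itself into the base of every $b$-step: when $a\notin acl(Aa_{<\alpha})$ one inserts $a$ as an extra legitimate step $c_\alpha=a$ (then $ab\nind^f_{Aa_{<\alpha}}a$ because $x=a$ divides over $Aa_{<\alpha}$), which only enlarges the order type to $\alpha+1+\beta\ge\alpha+\beta$ and hence still suffices by the order-type remark preceding this proposition; the case $a\in acl(Aa_{<\alpha})$ is handled directly, dividing being insensitive to algebraic closure of the base. After this the base of each $b$-step contains $Aab_{<j}$, and it remains to add the extra parameters $a_{<\alpha}$ without destroying the division.

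The addition of $a_{<\alpha}$ to the base would be carried out by the following transfer lemma, which is the exact place where $NTP_2$ enters through left transitivity of non-forking (the same property already used in Proposition~\ref{DD_Dividing}): if $b\nind^f_{D}b_j$ and $a_{<\alpha}\ind^f_{D}b_j$, then $b\nind^f_{Da_{<\alpha}}b_j$. Indeed, assuming $b\ind^f_{Da_{<\alpha}}b_j$, left transitivity applied to $a_{<\alpha}\ind^f_{D}b_j$ and $b\ind^f_{Da_{<\alpha}}b_j$ yields $a_{<\alpha}b\ind^f_{D}b_j$, whence $b\ind^f_{D}b_j$ by left monotonicity, contradicting the hypothesis. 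Applying this with $D=Aab_{<j}$ gives exactly $b\nind^f_{Aa_{<\alpha}b_{<j}}b_j$, and then left monotonicity upgrades $b$ to $ab$, completing the $b$-step and, through the order-type remark, the conclusion $(\alpha+\beta)_+\le DD(ab/A)$.

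The main obstacle is the hypothesis of the transfer lemma, namely producing $a_{<\alpha}\ind^f_{Aab_{<j}}b_j$ simultaneously for every $j\in\beta$. This cannot be extracted from a single independence such as $a_{<\alpha}\ind^f_{Aa}b_{<\beta}$ by base monotonicity, since base monotonicity fails outside simple theories --- this is precisely the subtlety that $NTP_2$ together with extensibility is meant to circumvent. My plan to secure it is to choose the two sequences with extra structure: first replace the $a$-sequence by an $Aa$-conjugate with $a_{<\alpha}\ind^f_{Aa}b_{<\beta}$ (possible by the extension property, and harmless because the conjugation fixes $Aa$ and so preserves every relation $a\nind^f_{Aa_{<i}}a_i$), and arrange the $b$-sequence to be a Morley sequence over $Aa$, so that this single independence propagates to $a_{<\alpha}\ind^f_{Aab_{<j}}b_j$ along each initial segment. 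Verifying this propagation in the $NTP_2$ setting, instead of simply quoting the simple-theory calculus, is the delicate point and the step I expect to require the most care.
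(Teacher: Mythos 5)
Your skeleton coincides with the paper's proof at every structural point: both identify forking and dividing via Chernikov--Kaplan, extract the two sequences from Lemma~\ref{DDeqInd}, use extensibility to replace $a_{<\alpha}$ by an $Aa$-conjugate $a'_{<\alpha}$ with $a'_{<\alpha}\ind^f_{Aa}b_{<\beta}$, and use left transitivity of non-forking as the transfer device for the $b$-part. Where you genuinely diverge is in reconciling the parameter $a$ with the base of the $b$-steps. The paper first reduces to the case where $tp(ab/A)$ is simple (otherwise $DD(ab/A)=\infty$ and there is nothing to prove), invokes Theorem~6.17 of Chernikov to get co-simplicity, and then removes $a$ from the base by two symmetry swaps: $b\nind^f_{Aaa'_{<\alpha}b_{<i}}b_i$ gives $b_i\nind^f_{Aaa'_{<\alpha}b_{<i}}b$ by simplicity of $tp(b/Aa)$, hence $b_i\nind^f_{Aa'_{<\alpha}b_{<i}}ab$, hence $ab\nind^f_{Aa'_{<\alpha}b_{<i}}b_i$ by co-simplicity of $tp(ab/A)$. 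You instead insert $a$ itself as an extra step $c_\alpha=a$ of the sequence, with the $acl$ dichotomy; this is legitimate ($x=a$ divides over $Aa'_{<\alpha}$ when $a\notin acl(Aa'_{<\alpha})$, the tuples are finite since the paper's types are finitary, $\alpha+1+\beta\geq\alpha+\beta$, and forking is insensitive to passing from a base to a subset of its algebraic closure containing it). This buys a proof that never mentions simplicity or co-simplicity at all --- a genuine simplification of the paper's argument, if the remaining step is closed.

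But your closing step rests on a false premise. You assert that $a'_{<\alpha}\ind^f_{Aab_{<j}}b_j$ cannot be obtained from $a'_{<\alpha}\ind^f_{Aa}b_{<\beta}$ by base monotonicity ``since base monotonicity fails outside simple theories.'' Base monotonicity holds for non-forking in \emph{every} theory: if $\varphi(x,c)$ divides over $D$, witnessed by $(c_i:i<\omega)$ with $c_i\equiv_D c$ and $\{\varphi(x,c_i):i<\omega\}$ $k$-inconsistent, then the same sequence witnesses dividing over any $B\subseteq D$; applying this to the dividing disjuncts witnessing forking shows that forking over $D$ implies forking over $B$, i.e.\ $X\ind^f_B CD$ implies $X\ind^f_{BD}C$. (What fails outside simple theories is symmetry, right transitivity and local character, not base monotonicity.) Hence $a'_{<\alpha}\ind^f_{Aa}b_{<\beta}$ yields $a'_{<\alpha}\ind^f_{Aab_{<j}}b_{<\beta}$ and then $a'_{<\alpha}\ind^f_{Aab_{<j}}b_j$ by monotonicity --- exactly the step the paper performs tacitly in one line. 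Your proposed repair, arranging $(b_j:j\in\beta)$ to be a Morley sequence over $Aa$, is both unnecessary and unworkable: the $b_j$ are handed to you by Lemma~\ref{DDeqInd} together with the relations $b\nind^f_{Aab_{<j}}b_j$, and you are not free to replace them by a Morley sequence without destroying precisely those relations. Delete that plan, quote base monotonicity, and your proof is complete --- and, modulo that fix, correct and more economical than the paper's.
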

	\begin{proof}
	    By Chernikov, Kaplan\cite{ChernikovKaplan}, in a $NTP_2$ and extensible theory, forking equals dividing,
	      so we use $\ind$ to denote independence without distinctions.
	    If $tp(ab/A)$ is not simple then $DD(ab/A)=\infty$ and the inequality is true. 
	    So we can suppose that $tp(ab/A)$ is simple and therefore, $tp(a/A)$ and $tp(b/Aa)$ are simple too.
	    As the theory is $NTP_2$, by theorem $6.17$ of Chernikov\cite{Chernikov}, they are also co-simple.
	      
	    Let $(a_i:i\in\alpha)$ such that $a\nind_{Aa_{<i}}a_i$ for every $i\in\alpha$ and let $(b_i:i\in\beta)$ such
	      that $b\nind_{Aab_{<i}}b_i$ for every $i\in\beta$.

		As forking has existence, we can choose $a'_{<\alpha}\equiv_{Aa}a_{<\alpha}$ such that verify 
		  $a'_{<\alpha}\ind_{Aa}b_{<\beta}$.
		We will check that $(a'_i:i\in\alpha)^{\frown}(b_i:i\in\beta)$ is a sequence of length $\alpha+\beta$ 
	      satisfying Lemma~\ref{DDeqInd} in $tp(ab/A)$.
		For the first part of the sequence: From $a\nind_{Aa_{<i}}a_i$ we obtain 
		  $a\nind_{Aa'_{<i}}a'_i$ and therefore, $ab\nind_{Aa'_{<i}}a'_i$.

		For the second part: From $a'_{<\alpha}\ind_{Aa}b_{<\beta}$, we have $a'_{<\alpha}\ind_{Aab_{<i}}b_i$.
		From  $b\nind_{Aab_{<i}}b_i$, by left transitivity, we obtain $b\nind_{Aaa'_{<\alpha}b_{<i}}b_i$.
		As $tp(b/Aa)$ is simple, $b_i\nind_{Aaa'_{<\alpha}b_{<i}}b$ and $b_i\nind_{Aa'_{<\alpha}b_{<i}}ab$.
		Finally, as $tp(ab/A)$ is co-simple, we have $ab\nind_{Aa'_{<\alpha}b_{<i}}b_i$.
	\end{proof}

	\begin{corollary}
		Let $T$ be a $NTP_2$ and extensible theory.
		If $DD(ab/A)$ is finite, then  
			\[DD(ab/A)=DD(a/A)+DD(b/Aa)\]
	\end{corollary}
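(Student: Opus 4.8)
The plan is to derive the equality by sandwiching $DD(ab/A)$ between $DD(a/A)+DD(b/Aa)$ and itself, using Proposition~\ref{DD_Add_Right} for one inequality and Proposition~\ref{DD_Add_Left} for the other. The crucial simplification is that, once everything is known to be finite, the natural (Hessenberg) sum $\oplus$ appearing in Proposition~\ref{DD_Add_Right} coincides with the ordinary ordinal sum $+$, and the distinction between $\alpha_-$ and $\alpha_+$ evaporates, since a finite value $n$ of $DD$ simply means that a tree of depth $n$ exists (so $n_+\leq DD$) while one of depth $n+1$ does not.

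First I would check that $DD(a/A)$ and $DD(b/Aa)$ are finite, so that the arithmetic makes sense. For $DD(a/A)$: if it were not finite, then $\alpha_+\leq DD(a/A)$ for every natural number $\alpha$; combining this with the trivial $0_+\leq DD(b/Aa)$ and applying Proposition~\ref{DD_Add_Left} gives $\alpha_+=(\alpha+0)_+\leq DD(ab/A)$ for all finite $\alpha$, whence $DD(ab/A)\geq\omega$, contradicting the hypothesis. The argument for $DD(b/Aa)$ is symmetric, taking $\alpha=0$ on the left.

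For the inequality $DD(ab/A)\leq DD(a/A)+DD(b/Aa)$, write $\alpha=DD(ab/A)$; since $\alpha$ is finite we have $\alpha_+\leq DD(ab/A)$, and Proposition~\ref{DD_Add_Right} produces ordinals $\beta,\gamma$ with $\alpha\leq\beta\oplus\gamma$, $\beta_+\leq DD(a/A)$ and $\gamma_+\leq DD(b/Aa)$. Then $\beta\leq DD(a/A)$ and $\gamma\leq DD(b/Aa)$ are both finite, so $\beta\oplus\gamma=\beta+\gamma$ and $\alpha\leq DD(a/A)+DD(b/Aa)$. For the reverse inequality, put $m=DD(a/A)$ and $k=DD(b/Aa)$; these are finite, so $m_+\leq DD(a/A)$ and $k_+\leq DD(b/Aa)$, and Proposition~\ref{DD_Add_Left} yields $(m+k)_+\leq DD(ab/A)$, i.e. $DD(a/A)+DD(b/Aa)\leq DD(ab/A)$. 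Combining the two inequalities gives the claim. I do not expect a genuine obstacle here: all the model-theoretic content is already carried by the two preceding propositions, and the only care needed is the reduction to finite ordinals so that $\oplus$ collapses to $+$ and the $\alpha_\pm$ bookkeeping stays harmless.
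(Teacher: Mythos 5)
Your proof is correct and takes exactly the route the paper intends: the corollary is stated without proof as an immediate consequence of Propositions~\ref{DD_Add_Right} and \ref{DD_Add_Left}, and you combine precisely these two, sandwiching $DD(ab/A)$ between the two sums. Your supplementary bookkeeping is also right and worth having made explicit: the finiteness of $DD(a/A)$ and $DD(b/Aa)$ (obtained from Proposition~\ref{DD_Add_Left} with one summand $0$, which is genuinely needed since $DD(b/Aa)\leq DD(ab/A)$ is not trivially monotone), the fact that a finite value $n$ of $DD$ always gives $n_+\leq DD$, and the collapse of $\oplus$ to $+$ on finite ordinals.
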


	\begin{proposition}\label{DD_Add_Eq}\footnote{We thank Martin Ziegler for his suggestions in the proof of this result.}
		Let $T$ be a simple theory and let $a$ and $b$ such that $a\ind_Ab$.
		If $\alpha_+\leq DD(a/A)$ and $\beta_+\leq DD(b/Aa)$, then $(\alpha\oplus\beta)_+\leq DD(ab/A)$.
	\end{proposition}
	\begin{proof}
	    in a simple theory, forking equals dividing, so we use $\ind$ to denote independence without distinctions.
	    
	    Let $(a_i:i\in\alpha)$ such that $a\nind_{Aa_{<i}}a_i$ for every $i\in\alpha$ and let $(b_i:i\in\beta)$ such
	      that $b\nind_{Aab_{<i}}b_i$ for every $i\in\beta$.
		We can choose $a'_{<\alpha}\equiv_{Aa}a_{<\alpha}$ such that $a'_{<\alpha}\ind_{Aa}bb_{<\beta}$.
		Following the same steps that in Proposition~\ref{DD_Add_Left}, we can prove that
		  $ab\nind_{Aa'_{<\alpha}b_{<i}}b_i$ for every $i\in\beta$.  
	    If we also prove that $ab\nind_{Aa'_{<i}b_{<\beta}}a'_i$ for every $i\in\alpha$, we will get that any
	      shuffle of these parameters conserving the original order among the $a_i$ and the $b_i$ also works, 
	      showing  that $(\alpha\oplus\beta)_+\leq DD(ab/A)$.

        From $a\nind_{Aa_{<i}}a_i$ we obtain $a\nind_{Aa'_{<i}}a'_i$.
        From $a\ind_A b$ and $a'_{<i}\ind_{Aa}b$ we obtain $aa'_{<i}\ind_{A}b$ and therefore $a\ind_{Aa'_{<i}}b$.
        As $a\nind_{Aa'_{<i}}a'_i$ 
          we obtain $a\nind_{Aba'_{<i}}a'_i$.
        Since  
          $b_{<\beta}\ind_{Aba'_{<i}}a'_i$ we get $a\nind_{Aba'_{<i}b_{<\beta}}a'_i$.
        Finally, this implies $ab\nind_{Aa'_{<i}b_{<\beta}}a'_i$
	\end{proof}

	\begin{proposition}
		For every $a,b$ and $A$:
		\begin{enumerate}
			\item If $a\in acl(Ab)$, then $DD(ab/A)=DD(b/A)$.
			\item If $acl(aA)=acl(bA)$, then $DD(a/A)=DD(b/A)$.			
		\end{enumerate}
	\end{proposition}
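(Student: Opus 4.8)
The plan is to run everything through the sequence characterization of Lemma~\ref{DDeqInd} and to prove each equality by showing that the two ranks admit trees of exactly the same depths. Recall that, by definition, $\alpha_+\leq DD(p)$ means precisely that a tree of depth $\alpha$ exists, and that the value of $DD(p)$ together with its $\pm$ decoration is completely determined by the set $\{\alpha:\alpha_+\leq DD(p)\}$ of achievable depths. Hence, to prove part (1) it suffices to establish the biconditional $\alpha_+\leq DD(ab/A)\iff\alpha_+\leq DD(b/A)$ for every ordinal $\alpha$, which then yields the equality on the nose, $\pm$ included.

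The easy inequality is $DD(b/A)\leq DD(ab/A)$, and it needs no algebraicity hypothesis. Given $\alpha_+\leq DD(b/A)$, Lemma~\ref{DDeqInd} (taking $b$ itself as the realization) provides a sequence $(a_i:i\in\alpha)$ with $b\nind^d_{Aa_{<i}}a_i$ for each $i$. Left monotonicity of dividing independence (were $ab\ind^d_{Aa_{<i}}a_i$ to hold, its reduct would give $b\ind^d_{Aa_{<i}}a_i$) upgrades this to $ab\nind^d_{Aa_{<i}}a_i$ for each $i$; since $ab$ realizes $tp(ab/A)$, the very same sequence witnesses $\alpha_+\leq DD(ab/A)$ via Lemma~\ref{DDeqInd}.

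The substantive direction is $DD(ab/A)\leq DD(b/A)$, where the hypothesis $a\in acl(Ab)$ enters. Starting from a witness $(a_i:i\in\alpha)$ with $ab\nind^d_{Aa_{<i}}a_i$, I would argue at each fixed index $i$ that the $b$-part alone already divides. Since $a\in acl(Ab)\subseteq acl(Aa_{<i}b)$ and algebraic elements never divide over their base, we have $a\ind^d_{Aa_{<i}b}a_i$; if in addition $b\ind^d_{Aa_{<i}}a_i$ held, then the left-transitivity (chain rule) for $\ind^d$ — the same splitting used in the proof of Proposition~\ref{DD_Add_Right}, now applied to the pair \emph{$b$ then $a$} — would give $ab\ind^d_{Aa_{<i}}a_i$, contradicting the choice of the sequence. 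Hence $b\nind^d_{Aa_{<i}}a_i$ for every $i$, so the same $(a_i)$ witnesses $\alpha_+\leq DD(b/A)$. I expect this to be the only delicate point: one must keep the base set $Aa_{<i}$ fixed, invoke the correct form of the chain rule, and use that algebraicity over the base forces independence.

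For part (2), I would feed part (1) into itself. From $acl(aA)=acl(bA)$ one reads off $a\in acl(Ab)$ and $b\in acl(Aa)$, so part (1) gives $DD(ab/A)=DD(b/A)$ and, with the roles of $a$ and $b$ exchanged, $DD(ba/A)=DD(a/A)$. Swapping coordinates is a definable bijection between the solution sets of $tp(ab/A)$ and $tp(ba/A)$, so item $5$ of Proposition~\ref{basicPropDD} gives $DD(ab/A)=DD(ba/A)$. Chaining the three equalities yields $DD(a/A)=DD(ba/A)=DD(ab/A)=DD(b/A)$, as required.
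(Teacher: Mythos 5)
Your proof is correct, and it diverges from the paper's in an instructive way on the substantive inequality $DD(ab/A)\leq DD(b/A)$. The paper disposes of it in one line: since $a\in acl(Ab)$ gives $DD(a/Ab)=0$ (Proposition~\ref{basicPropDD}(4)), Proposition~\ref{DD_Add_Right} applied to the pair $(b,a)$ yields ordinals $\beta,\gamma$ with $\alpha\leq\beta\oplus\gamma$, $\gamma_+\leq DD(a/Ab)=0$, hence $\gamma=0$, $\alpha\leq\beta$ and $\alpha_+\leq DD(b/A)$. You instead inline the mechanism hiding inside Proposition~\ref{DD_Add_Right}: for each fixed $i$, the splitting given by left transitivity of $\ind^d$ (valid in any theory, and the same tool the paper uses to prove \ref{DD_Add_Right}) together with $a\ind^d_{Aa_{<i}b}a_i$ from algebraicity forces $b\nind^d_{Aa_{<i}}a_i$ outright. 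This buys you something slightly sharper: the \emph{identical} sequence $(a_i:i\in\alpha)$ witnesses $\alpha_+\leq DD(b/A)$ via Lemma~\ref{DDeqInd}, with no order-type extraction or natural-sum bookkeeping, and no implicit appeal to $DD(ab/A)=DD(ba/A)$ to permute the coordinates in \ref{DD_Add_Right}; the cost is duplicating an argument the paper already packaged. For part (2) the paper only says it ``follows from $1$''; your explicit chain, including the coordinate swap justified by the definable-bijection invariance of Proposition~\ref{basicPropDD}(5), is exactly the intended reduction, spelled out. Both routes rest on the same two facts --- left transitivity of non-dividing and that algebraic types do not divide over their base --- so the difference is one of granularity rather than substance.
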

	\begin{proof}
		We check $1$, $2$ follows from $1$.
		It is immediate that $DD(b/A)\geq\alpha_+$ implies $DD(ab/A)\geq\alpha_+$, so 
		  $DD(ab/A)\geq DD(b/A)$ is always true.
		If $a\in acl(Ab)$, then $DD(a/Ab)=0$ and we obtain the other direction by Proposition~\ref{DD_Add_Right}.
	\end{proof}

    We are going to express the content of \ref{DD_Add_Right}, \ref{DD_Add_Left} and \ref{DD_Add_Eq} with
      inequalities. 
    To do so, we introduce an arithmetic on the set of extended ordinals 
      $\{\alpha_{+} \, :\, \alpha\in Ord \}\cup\{\alpha_{-} \, :\, \alpha\in Lim\}$ defined in
      Definition~\ref{DDDef}:

    \begin{definition}
		We define $\hat{+}$:
		\begin{itemize}
			\item $\alpha_-\hat{+}\beta_-=\sup\{\gamma+\beta:\gamma<\alpha\}_-$
			\item $\alpha_-\hat{+}\beta_+=\sup\{\gamma+\beta:\gamma<\alpha\}$ \\
			  (with $_-$ or $_+$ depending if the supremum is reached).
			\item $\alpha_+\hat{+}\beta_-=(\alpha+\beta)_-$
			\item $\alpha_+\hat{+}\beta_+=(\alpha+\beta)_+$
		\end{itemize}
		And now define $\hat{\oplus}$:
		\begin{itemize}
			\item $\alpha_-\hat{\oplus}\beta_-=\sup\{\gamma\oplus\delta:\gamma<\alpha,\delta<\beta\}_-$
			\item $\alpha_+\hat{\oplus}\beta_-=(\alpha\oplus\beta)_-$
			\item $\alpha_+\hat{\oplus}\beta_+=(\alpha\oplus\beta)_+$
		\end{itemize}
    \end{definition}

    \begin{corollary}
        For every $a,b$ and $A$, 
        \begin{enumerate}
            \item $DD(ab/A)\leq DD(a/A)\hat{\oplus}DD(b/Aa)$.
            \item In a $NTP_2$ and extensible theory, $DD(a/A)\hat{+}DD(b/Aa)\leq DD(ab/A)$.
            \item In a simple theory, if $a\ind_Ab$, $DD(ab/A)=DD(a/A)\hat{\oplus}DD(b/Aa)$.
        \end{enumerate}
    \end{corollary}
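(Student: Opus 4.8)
The plan is to read the three inequalities as statements about the lexicographically ordered set of extended ordinals and to reduce each of them, through one common observation, to the matching Proposition~\ref{DD_Add_Right}, \ref{DD_Add_Left} or \ref{DD_Add_Eq}. The observation is that for extended ordinals $X,Y$ one has $X\le Y$ if and only if every $\delta$ with $\delta_+\le X$ also satisfies $\delta_+\le Y$; this follows from the order $\alpha_-<\alpha_+\le\beta_-<\beta_+$ (for $\alpha<\beta$) after separating the two cases $X=\epsilon_+$ and $X=\epsilon_-$. In this way each inequality relating $DD(ab/A)$ to a $\hat{+}$- or $\hat{\oplus}$-combination becomes a statement quantified over the ordinals $\delta$ that carry a witnessing tree, which is exactly the data handled by the three additivity propositions.

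For part $1$, $DD(ab/A)\le DD(a/A)\hat{\oplus}DD(b/Aa)$, I would fix $\delta$ with $\delta_+\le DD(ab/A)$ and feed it to Proposition~\ref{DD_Add_Right}, obtaining ordinals $\beta',\gamma'$ with $\delta\le\beta'\oplus\gamma'$, $\beta'_+\le DD(a/A)$ and $\gamma'_+\le DD(b/Aa)$. It then remains to derive $\delta_+\le DD(a/A)\hat{\oplus}DD(b/Aa)$, a pure ordinal computation split according to whether $DD(a/A)$ and $DD(b/Aa)$ carry a $+$ or a $-$ subscript; the cases reproduce the clauses of the definition of $\hat{\oplus}$ (the missing $\alpha_-\hat{\oplus}\beta_+$ being read off by commutativity of $\oplus$). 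When both ranks are $+$ one uses monotonicity of $\oplus$ to get $\delta\le\alpha\oplus\beta$; when some rank is $-$ one uses that $\beta'<\alpha$ forces $\beta'\oplus\gamma'<\alpha\oplus\beta$ by strict monotonicity, together with the fact that the supremum defining $\alpha_-\hat{\oplus}\beta_-$ has no maximum. This part is routine.

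Part $2$, $DD(a/A)\hat{+}DD(b/Aa)\le DD(ab/A)$, is dual. Working in a $NTP_2$ extensible theory I would fix $\delta$ with $\delta_+\le DD(a/A)\hat{+}DD(b/Aa)$, unpack the definition of $\hat{+}$ to produce $\alpha',\beta'$ with $\alpha'_+\le DD(a/A)$, $\beta'_+\le DD(b/Aa)$ and $\delta\le\alpha'+\beta'$, and then apply Proposition~\ref{DD_Add_Left} to conclude $(\alpha'+\beta')_+\le DD(ab/A)$, hence $\delta_+\le DD(ab/A)$. Here the subscript analysis is smoother than in part $1$ because ordinary ordinal addition is continuous in its right argument: for limit $\beta$ one has $\alpha+\beta=\sup_{\xi<\beta}(\alpha+\xi)$, and $\sup\{\gamma+\beta:\gamma<\alpha\}$ behaves precisely as the definition of $\hat{+}$ prescribes, so that the ``sign depending on whether the supremum is reached'' clause of $\alpha_-\hat{+}\beta_+$ is matched by whether the witnessing $\gamma$ can be taken maximal.

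For part $3$ the inequality $\le$ is just part $1$, whose hypotheses are unconditional, so only $\ge$ is at stake; for it I would use Proposition~\ref{DD_Add_Eq}, which from $\alpha_+\le DD(a/A)$ and $\beta_+\le DD(b/Aa)$ yields $(\alpha\oplus\beta)_+\le DD(ab/A)$. In the two pure cases this closes cleanly: if both ranks are $+$ one matches the part $1$ bound at $(\alpha\oplus\beta)_+$; if both are $-$ one obtains trees of every depth $\gamma\oplus\delta'$ with $\gamma<\alpha,\delta'<\beta$, whose supremum is exactly the $S$ of $\alpha_-\hat{\oplus}\beta_-=S_-$, again meeting the upper bound. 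The step I expect to be the genuine obstacle is the mixed case $\alpha_+\hat{\oplus}\beta_-$: Proposition~\ref{DD_Add_Eq} only delivers trees of depth $\alpha\oplus\beta'$ for $\beta'<\beta$, whose supremum is $\sup\{\alpha\oplus\beta':\beta'<\beta\}$, and since the natural sum is \emph{not} continuous in its arguments this supremum need not coincide with $\alpha\oplus\beta$. Reconciling the lower bound produced by \ref{DD_Add_Eq} with the value $(\alpha\oplus\beta)_-$ appearing in the definition of $\hat{\oplus}$ is the delicate point, and it must be checked against the sharp upper bound that Proposition~\ref{DD_Add_Right} gives in the same case; this continuity subtlety in $\oplus$ is where I would concentrate the effort.
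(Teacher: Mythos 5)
Your reduction is in fact the paper's own: its entire proof of this corollary is the single sentence that the statement ``is equivalent to'' Propositions~\ref{DD_Add_Right}, \ref{DD_Add_Left} and \ref{DD_Add_Eq}, and your unpacking via the principle that $X\leq Y$ iff every $\delta$ with $\delta_+\leq X$ satisfies $\delta_+\leq Y$ is precisely the intended (but unwritten) case analysis. Your treatment of parts $1$ and $2$ and of the two pure cases of part $3$ is correct; in particular your appeal to the right-continuity of ordinary ordinal addition, $\alpha+\beta=\sup_{\xi<\beta}(\alpha+\xi)$ for limit $\beta$, is exactly what makes the clause $\alpha_+\hat{+}\beta_-=(\alpha+\beta)_-$ of part $2$ close, and your observation that the supremum in $\alpha_-\hat{\oplus}\beta_-$ has no maximum (by strict monotonicity of $\oplus$ in each argument) settles the double-minus case of part $3$.

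The obstacle you flag in the mixed case of part $3$ is genuine, and it is a discrepancy in the paper's statement rather than a defect of your argument; the paper's one-line proof silently skips it. Concretely, take $\alpha=1$, $\beta=\omega$: then $\sup\{1\oplus n:n<\omega\}=\omega$ while $1\oplus\omega=\omega+1$. If $DD(a/A)=1_+$ and $DD(b/Aa)=\omega_-$ with $a\ind_A b$ (plausible in a supersimple theory where $tp(b/Aa)$ admits dividing sequences of every finite length but none of length $\omega$, as for a type of $SU$-rank $\omega$), then Proposition~\ref{DD_Add_Right} bounds every achieved depth $\delta$ by $\beta'\oplus\gamma'$ with $\beta'\leq 1$ and $\gamma'<\omega$, so $\delta$ is finite and $DD(ab/A)=\omega_-$; yet the definition gives $1_+\hat{\oplus}\omega_-=(\omega+1)_-$, which is not even a legal extended ordinal, since $\omega+1$ is a successor and the subscript $_-$ is reserved for limits. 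In general, when $DD(a/A)=\alpha_+$ and $DD(b/Aa)=\beta_-$, the two propositions together pin the value down to $\sup\{\alpha\oplus\delta:\delta<\beta\}_-$ (a legitimate $_-$ value, the set having no maximum), which lies strictly below $(\alpha\oplus\beta)_-$ exactly when $\oplus$ fails to be continuous there. The repair is to amend the clause to $\alpha_+\hat{\oplus}\beta_-=\sup\{\alpha\oplus\delta:\delta<\beta\}_-$ (adding the symmetric clause for $\alpha_-\hat{\oplus}\beta_+$, as you note); with that emendation your reduction closes all cases, and part $1$ simultaneously becomes sharp instead of merely valid in this case.
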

	\begin{proof}
	    This is equivalent to \ref{DD_Add_Right}, \ref{DD_Add_Left} and \ref{DD_Add_Eq}.
	\end{proof}

	\begin{proposition}
		Let $S,T$ be type-definable sets.
		Then, 
		  \[DD(S)\hat{+}DD(T)\leq DD(S\times T)\leq DD(S)\hat{\oplus}DD(T)\]
		As $S$ and $T$ have the same role, at the left we can take the maximum between $DD(S)\hat{+}DD(T)$ and
		  $DD(T)\hat{+}DD(S)$.
	\end{proposition}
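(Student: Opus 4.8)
The plan is to prove the two inequalities separately. Write $S$ and $T$ as the sets defined by partial types $p(x)$ over $A$ and $q(y)$ over $B$, so that $S\times T$ is defined by $p(x)\cup q(y)$ over $AB$ in the disjoint variables $x,y$. For the upper bound $DD(S\times T)\leq DD(S)\hat{\oplus}DD(T)$ I would argue pointwise. By Remark~\ref{DDsupCT}, $DD(S\times T)=\sup\{DD(ab/AB):a\models p,\ b\models q\}$, since every realization of $p(x)\cup q(y)$ splits as a pair $(a,b)$ with $a\models p$ and $b\models q$. For each such pair the preceding corollary gives $DD(ab/AB)\leq DD(a/AB)\hat{\oplus}DD(b/ABa)$. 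Now $tp(a/AB)\vdash p$ because $a\models p$ and $dom(p)\subseteq AB$, so $DD(a/AB)\leq DD(p)=DD(S)$ by Proposition~\ref{basicPropDD}(2); likewise $DD(b/ABa)\leq DD(q)=DD(T)$. Since $\hat{\oplus}$ is monotone in each argument (immediate from its definition), $DD(ab/AB)\leq DD(S)\hat{\oplus}DD(T)$, and taking the supremum over all pairs closes this direction.

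For the lower bound it suffices to prove the building block: if $\alpha_+\leq DD(S)$ and $\beta_+\leq DD(T)$ then $(\alpha+\beta)_+\leq DD(S\times T)$; the passage from here to $DD(S)\hat{+}DD(T)\leq DD(S\times T)$ is then exactly as in the preceding corollary, the definition of $\hat{+}$ being designed to record the suprema of the values $(\alpha+\beta)_+$. The key feature of a product is that witnesses in the two coordinates never interfere in the consistency clause, so, in contrast with Proposition~\ref{DD_Add_Left}, no hypothesis on the theory is needed. Exploiting that $DD$ does not depend on the set of parameters, I would first take, since $\alpha_+\leq DD(p)$, a dividing sequence $(\varphi_i(x,a_i):i\in\alpha)$ in $p$ computed over the enlarged base $AB$, so that each $\varphi_i(x,a_i)$ divides over $ABa_{<i}$ and $p\cup\{\varphi_i(x,a_i):i\in\alpha\}$ is consistent; then, with $a_{<\alpha}$ now fixed, take a dividing sequence $(\psi_j(y,b_j):j\in\beta)$ in $q$ computed over the base $ABa_{<\alpha}$, so that each $\psi_j(y,b_j)$ divides over $ABa_{<\alpha}b_{<j}$ and $q\cup\{\psi_j(y,b_j):j\in\beta\}$ is consistent.

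I then claim the concatenation $(\varphi_i(x,a_i):i\in\alpha)^{\frown}(\psi_j(y,b_j):j\in\beta)$, read as formulas in $(x,y)$ over the base $AB$, is a dividing sequence of depth $\alpha+\beta$ in $p\cup q$. Consistency holds because a realization $a$ of the first block and a realization $b$ of the second block combine into a single realization $(a,b)$, the variables being disjoint. The dividing clauses hold by construction, since the parameters preceding $\varphi_i(x,a_i)$ in the concatenation are exactly $ABa_{<i}$ and those preceding $\psi_j(y,b_j)$ are exactly $ABa_{<\alpha}b_{<j}$, and a formula dividing over a base still divides over it when regarded as a formula in the additional dummy variable. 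By the equivalence $4\Leftrightarrow 1$ of Proposition~\ref{DDEquiv} this yields $(\alpha+\beta)_+\leq DD(S\times T)$. The symmetric clause follows because the coordinate swap is a definable bijection $S\times T\to T\times S$, whence $DD(S\times T)=DD(T\times S)$ by Proposition~\ref{basicPropDD}(5), and the lower bound applied to $T\times S$ gives $DD(T)\hat{+}DD(S)\leq DD(S\times T)$ as well. The step I expect to be most delicate is purely bookkeeping in the lower bound: aligning the two bases so the sequences fit together over the common base $AB$, together with checking that the definitions of $\hat{+}$ and $\hat{\oplus}$ really repackage the building-block inequalities in all the $_-$ and $_+$ cases.
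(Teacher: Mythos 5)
Your proposal is correct and takes essentially the same route as the paper: the upper bound is obtained pointwise from Proposition~\ref{DD_Add_Right} and closed by taking suprema via Remark~\ref{DDsupCT}, and the lower bound by concatenating a dividing sequence for $S$ over the common base with a dividing sequence for $T$ over that base enlarged by the parameters of the first sequence, exploiting that $DD$ is independent of the parameter set. Your extra justifications (the coordinate-swap bijection via Proposition~\ref{basicPropDD}, the dummy-variable remark, and the $\hat{+}$/$\hat{\oplus}$ case bookkeeping) are details the paper leaves implicit, not a different argument.
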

	\begin{proof}
		We can assume without loss of generality that $S$ and $T$ are type-defined over the same set of parameters
		  $A$.
        By Proposition~\ref{DD_Add_Right}, for any $a\in S, b\in T$, we have 
          $DD(ab/A)\leq DD(a/A)\hat{\oplus}DD(b/A)$, taking supremums and by Remark~\ref{DDsupCT} we obtain the right
          inequality.
			
		For the left inequality, we take a dividing sequence $\{\varphi_i(x,a_i):i\in\alpha\}$ for $S$ over $A$ and a
		  dividing sequence for $T$ over $Aa_{<\alpha}$. 
		The two sequences together form a dividing sequence for $S\times T$.
	\end{proof}

\end{document}